%%
%% Copyright 2007-2019 Elsevier Ltd
%%
%% This file is part of the 'Elsarticle Bundle'.
%% ---------------------------------------------
%%
%% It may be distributed under the conditions of the LaTeX Project Public
%% License, either version 1.2 of this license or (at your option) any
%% later version.  The latest version of this license is in
%%    http://www.latex-project.org/lppl.txt
%% and version 1.2 or later is part of all distributions of LaTeX
%% version 1999/12/01 or later.
%%
%% The list of all files belonging to the 'Elsarticle Bundle' is
%% given in the file `manifest.txt'.
%%
%% Template article for Elsevier's document class `elsarticle'
%% with harvard style bibliographic references

%\documentclass[preprint,12pt]{elsarticle}
\documentclass[review,12pt]{elsarticle}
\UseRawInputEncoding
%% Use the option review to obtain double line spacing
%% \documentclass[authoryear,preprint,review,12pt]{elsarticle}

%% Use the options 1p,twocolumn; 3p; 3p,twocolumn; 5p; or 5p,twocolumn
%% for a journal layout:
%% \documentclass[final,1p,times,authoryear]{elsarticle}
%% \documentclass[final,1p,times,twocolumn,authoryear]{elsarticle}
%% \documentclass[final,3p,times,authoryear]{elsarticle}
%% \documentclass[final,3p,times,twocolumn,authoryear]{elsarticle}
%% \documentclass[final,5p,times,authoryear]{elsarticle}
%% \documentclass[final,5p,times,twocolumn,authoryear]{elsarticle}

%% For including figures, graphicx.sty has been loaded in
%% elsarticle.cls. If you prefer to use the old commands
%% please give \usepackage{epsfig}

%% The amssymb package provides various useful mathematical symbols
%\usepackage[numbers,sort&compress]{natbib}
\biboptions{numbers,sort&compress}
\usepackage{amssymb}
\usepackage{graphicx}
\usepackage{amsmath}
\usepackage{lineno,hyperref}
\usepackage{amsmath}
\usepackage{appendix}
\usepackage{graphicx}
\usepackage{setspace}
\usepackage{geometry}
%参考文献
%% The amsthm package provides extended theorem environments
%% \usepackage{amsthm}
% 公式编号中带章节号
\numberwithin{equation}{section}
\newtheorem{theorem}{Theorem}[section]

\newtheorem{proposition}{Proposition}[section]
\newtheorem{remark}{Remark}[section]

 %定义证明
% 证明结束符
% 公式编号中带章节号
%\geometry{a4paper,scale=0.8}%正文占页面的比例为80%
\geometry{a4paper,left=2.6cm,right=2.6cm,top=3.5cm,bottom=3cm}
\linespread{1.2}

%% The lineno packages adds line numbers. Start line numbering with
%% \begin{linenumbers}, end it with \end{linenumbers}. Or switch it on
%% for the whole article with \linenumbers.
%% \usepackage{lineno}

%\journal{}

\begin{document}

\begin{frontmatter}

%% Title, authors and addresses

%% use the tnoteref command within \title for footnotes;
%% use the tnotetext command for theassociated footnote;
%% use the fnref command within \author or \address for footnotes;
%% use the fntext command for theassociated footnote;
%% use the corref command within \author for corresponding author footnotes;
%% use the cortext command for theassociated footnote;
%% use the ead command for the email address,
%% and the form \ead[url] for the home page:
%% \title{Title\tnoteref{label1}}
%% \tnotetext[label1]{}
%% \author{Name\corref{cor1}\fnref{label2}}
%% \ead{email address}
%% \ead[url]{home page}
%% \fntext[label2]{}
%% \cortext[cor1]{}
%% \address{Address\fnref{label3}}
%% \fntext[label3]{}

\title{Dynamics for a Ratio-dependent Prey-predator Model with Different Free Boundaries \tnoteref{mytitlenote}}
\tnotetext[mytitlenote]{This work was supported by \href{http://www.ctan.org/tex-archive/macros/latex/contrib/elsarticle}{NSFC 12071316}.}
\author[]{Lingyu Liu}
\ead{liu\_lingyu@foxmail.com}

%\address[mymainaddress,mysecondaryaddress]{Department of Mathematics, Sichuan University, Chengdu 610065, PR China}
\address{Department of Mathematics, Sichuan University, Chengdu 610065, PR China}

\begin{abstract}
In this paper, we study the dynamics of the ratio-dependent type prey-predator model with different free boundaries. The two free boundaries, determined by prey and predator respectively, implying that they may intersect each other as time evolves, are used to describe the spreading of prey and predator. We mainly investigate the longtime behaviors of predator and prey. Then some sufficient conditions for spreading and vanishing are given. In addition, when spreading occurs, some estimates of asymptotic spreading speeds of $u$, $v$ and  asymptotic speeds of $h$, $g$ are provided.
\end{abstract}

\begin{keyword}
\texttt{free boundary\sep ratio-dependent model\sep spread\sep vanish\sep criteria \sep asymptotic speed}
\end{keyword}

\end{frontmatter}

%% \linenumbers

%% main text
\section{Introduction}
The dynamic of species is a fundamental content in ecology. In order to study this problem, numerous researches have been done from a mathematical viewpoint. In this paper, we consider a ratio-dependent prey-predator model with different double free boundaries as follows,
\begin{equation}\label{Q}
\left\{
\begin{array}{ll}
u_t-u_{xx}=u(\lambda -u-\frac{bv}{u+mv}),    &t>0 ,~0<x<h(t),\\
v_t -dv_{xx}=v(1-v+\frac{cu}{u+mv}),       &t>0 ,~0<x<g(t),\\
u_x=v_x=0,                          &t\geq0,~x=0,\\
u(x)=0~ for~x\geq h(t),~v(x)=0~for~x\geq g(t),                             &t\geq0,\\
h^{\prime}(t)=-\mu u_x(t,h(t)),~g^{\prime}(t)=-\rho v_x(t,g(t)), &t\geq0,\\
u(0,x)=u_0(x)~in~[0,h_0] ,~ v(0,x)=v_0(x)~in~[0,g_0],\\
h(0)=h_0\geq g(0)=g_0>0,
\end{array}
\right.
\end{equation}
where $\lambda$, $b$, $m$, $d$, $c$, $\mu$ ($\rho$), $h_0$ ($g_0$) are positive constants that stand for prey intrinsic growth, capturing rate, half capturing saturation constant, dispersal rate, conversion rate, moving parameter of prey (predator), initial habitat of prey (predator), respectively. $u$ and $v$ stand for prey and predator density, respectively. Term $\frac{u}{u+mv}$ is the functional response.
The initial functions $u_0(x)$ and $v_0(x)$ satisfy
$$
u_0 \in {C}^2([0,h_0]),~u_0(x)>0,~x\in[0,h_0),
$$
$$
v_0\in C^2([0,g_0]),~v_0(x)>0,~x\in[0,g_0).
$$
The formulas $x=h(t)$ and $x=g(t)$ are the free boundaries to be solved. The free boundary conditions in (\ref{Q}) are in accord with the one-phase Stefan condition which stems from the investigations of melting of ice \cite{L1971The} and wound healing \cite{15}. In \cite{Du1}, Du and Lou made a first attempt to introduce the Stefan condition into the study of spreading populations. Afterwards, Du and his cooperators (\cite{Du1,Du2,Du3,Du4,Du5,Du6,Du2001LOGISTIC,2012Du}), Wang and his cooperators (\cite{wang5,wang6,wang7,wang8,wang9,wang10,wang11,wang12,Wangzhao2014a}) and other scholars did lots of research on the spreading and vanishing of species.

The two free boundaries $h(t)$ and $g(t)$ may intersect each other due to their separate independence, which may be difficult to understand the whole dynamics of model (\ref{Q}).
In \cite{wang12}, Wang and Zhang studied a similar spreading mechanism but for a diffusive Lotka-Volterra type prey-predator model. The response function of the model mentioned above depends only on prey, which illustrates that the predation's behavior is determined solely by prey. Nevertheless, there is mounting evidence that, in some specific ecological environments, especially when a predator has to actively seek, share and plunder prey, a more rational prey-predator model should be ratio-dependent, i.e., the response function is ratio-dependent. Thus, we attempt to study the ratio-dependent prey-predator model with different double free boundaries in this paper.

The main objective of this paper is to investigate the dynamics of problem (\ref{Q}) and attempt to explain its biological significance. Same to \cite[Theorem 2.1, Lemma 2.1, Theorem 2.2]{wang12} , for the global existence, uniqueness and regularity of solution $(u,v,h,g)$, we have the following theorem.

\begin{theorem}\label{th3}
The problem (\ref{Q}) has a unique global solution. Besides, for any $\alpha\in(0,1)$,
$$
(u,v,h,g)\in{C}^{\frac{1+\alpha}{2},1+\alpha}(\overline D_\infty^{h(t)})\times{C}^{\frac{1+\alpha}{2},1+\alpha}(\overline D_\infty^{g(t)})\times [{C}^{1+\frac{\alpha}{2}}([0,T])]^2,
$$
where $D_\infty^s=\{(t,x)\in\mathbb{R}^2:t>0,0<x<s(t)\}$. Moreover,
$$
0<u(t,x)\leq \max\{\lambda,||u_0||_{\infty}\}=:M_1,~t>0,~0\leq x\leq h(t),,\
$$
$$
0<v(t,x)\leq \max\{1+c,||v_0||_{\infty}\}=:M_2,~t>0,~0\leq x\leq g(t),
$$
$$
0\leq h^{\prime}(t)\leq 2\mu\max\{M_1\sqrt{\lambda/2},-\min\limits_{[0,h_0]}u_0^{\prime}(x)\}=:M_3,~t>0,
$$
$$
0\leq g^{\prime}(t)\leq 2\rho\max\{M_2\sqrt{(1+c)/2d},-\min\limits_{[0,g_0]}v_0^{\prime}(x)\}=:M_4,~t>0.
$$
\end{theorem}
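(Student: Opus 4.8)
The plan is to follow the now-standard route for one-phase Stefan free boundary problems (cf. Du--Lin and \cite{wang12}), adapted to the ratio-dependent coupling and the two independently moving fronts.

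\textbf{Local existence and uniqueness.} First I would straighten the two free boundaries \emph{separately}: set $y=x/h(t)$ for the $u$-equation and $y=x/g(t)$ for the $v$-equation, transforming (\ref{Q}) into a coupled parabolic system on the fixed intervals $[0,1]$, with $h,g$ entering as unknowns through the transformed diffusion coefficients and the Stefan conditions $h'(t)=-\mu h(t)^{-1}\tilde u_y(t,1)$, $g'(t)=-\rho g(t)^{-1}\tilde v_y(t,1)$. The point that makes the usual contraction-mapping argument go through is that, although the response $\frac{u}{u+mv}$ fails to be Lipschitz at $(u,v)=(0,0)$, the reaction terms themselves are: writing $u(\lambda-u-\tfrac{bv}{u+mv})=\lambda u-u^2-b\,f(u,v)$ and $v(1-v+\tfrac{cu}{u+mv})=v-v^2+c\,f(u,v)$ with $f(u,v):=\tfrac{uv}{u+mv}$ (and $f(0,0):=0$), one has $|f_u|\le 1/m$, $|f_v|\le 1$ on $[0,\infty)^2$, so $f$ is globally Lipschitz; moreover on $\{g(t)<x<h(t)\}$ one simply reads $v\equiv 0$, so the coupling term drops out there and stays continuous across $x=g(t)$. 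With this, a Banach fixed point in $C^{\frac{1+\alpha}{2},1+\alpha}$ on a short interval $[0,T_0]$, using interior--boundary Schauder estimates, yields a unique local solution of the stated regularity; $h,g\in C^{1+\alpha/2}$ then follows because $t\mapsto u_x(t,h(t))$ (resp. $v_x(t,g(t))$) inherits $C^{\alpha/2}$ regularity from $u_x\in C^{\alpha/2,\alpha}$ together with the Lipschitz continuity of $h$ (resp. $g$).

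\textbf{A priori bounds.} For the $L^\infty$ bounds I would use comparison. Since $\tfrac{bv}{u+mv}\ge 0$, $u$ is a subsolution of $u_t-u_{xx}=u(\lambda-u)$; the spatially constant solution $\bar u(t)$ of $\bar u'=\bar u(\lambda-\bar u)$, $\bar u(0)=\|u_0\|_\infty$, satisfies $\bar u_t-\bar u_{xx}-(\lambda-u-\tfrac{bv}{u+mv})\bar u=\bar u\,\tfrac{bv}{u+mv}\ge 0$ on $\{0<x<h(t)\}$ and $\bar u(t)\le\max\{\lambda,\|u_0\|_\infty\}=M_1$, so $u\le M_1$; the same argument with $1-v+\tfrac{cu}{u+mv}\le 1+c$ gives $v\le M_2$. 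Positivity of $u$ in $\{x<h(t)\}$ and $v$ in $\{x<g(t)\}$, together with $h',g'\ge 0$, follow from the strong maximum principle and the Hopf lemma (which force $u_x(t,h(t))\le 0$, $v_x(t,g(t))\le 0$). For the upper bounds on $h',g'$ I would use the standard barrier: in $\{h(t)-L^{-1}<x<h(t)\}$ compare $u$ with $w(t,x)=M_1\big(2L(h(t)-x)-L^2(h(t)-x)^2\big)$; since $0\le w\le M_1$ and $w_t-w_{xx}\ge 2M_1L^2$ (using $h'\ge 0$), $w$ is a supersolution once $2M_1L^2\ge\lambda M_1$, while $w\ge u$ on the parabolic boundary provided $M_1L\ge-\min_{[0,h_0]}u_0'$ (so that $w(0,\cdot)\ge u_0$). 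Taking $L=\max\{\sqrt{\lambda/2},\,(-\min_{[0,h_0]}u_0')/M_1\}$ gives $-u_x(t,h(t))\le-w_x(t,h(t))=2M_1L$, hence $h'(t)\le 2\mu M_1L=M_3$; the analogous barrier $M_2\big(2L(g(t)-x)-L^2(g(t)-x)^2\big)$ with $L=\max\{\sqrt{(1+c)/(2d)},\,(-\min_{[0,g_0]}v_0')/M_2\}$ yields $g'(t)\le M_4$.

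\textbf{Global existence; main obstacle.} With the uniform bounds in hand, if $[0,T_{\max})$ is the maximal existence interval and $T_{\max}<\infty$, then $h,g$ stay bounded and, by parabolic estimates, $u,v$ are bounded in $C^{\frac{1+\alpha}{2},1+\alpha}$ up to $T_{\max}$; restarting the local existence theorem from a time close to $T_{\max}$ extends the solution beyond $T_{\max}$, a contradiction, so $T_{\max}=\infty$, and the stated Hölder regularity on each $[0,T]$ is that of the fixed-point space plus the bootstrap above. The only genuinely new point relative to the prey-dependent model of \cite{wang12} is checking that the ratio-dependent reaction terms retain enough smoothness — global Lipschitz continuity, and $C^\alpha$ regularity of the straightened coefficients, including across $x=g(t)$ where $v$ has only a one-sided derivative — for the contraction and Schauder machinery to apply unchanged; everything else (the barrier constructions and the continuation argument) is routine, and I expect the barrier bookkeeping to be the most error-prone part, though it is exactly what produces the explicit constants $M_3,M_4$.
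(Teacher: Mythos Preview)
Your proposal is correct and follows precisely the route the paper has in mind: the paper itself gives no proof but simply cites \cite[Theorem 2.1, Lemma 2.1, Theorem 2.2]{wang12}, i.e., the Du--Lin/Wang--Zhang machinery of straightening, contraction in H\"older spaces, comparison bounds, the quadratic barrier for $h',g'$, and continuation. You have also correctly isolated the one point that is not literally in \cite{wang12}---the global Lipschitz continuity of $f(u,v)=uv/(u+mv)$ (with $|f_u|\le 1/m$, $|f_v|\le 1$) ensuring that the ratio-dependent nonlinearity causes no trouble in the fixed-point and Schauder steps---so nothing further is needed.
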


This paper is organized as follows. In Section 2, we give some fundamental results which will be used in the following sections. Section 3 is concerned with the longtime behaviors of $u$ and $v$ under different conditions. In Section 4, we provide the conditions for spreading and vanishing of prey and predator. In Section 5, some estimates of the asymptotic spreading speeds of $u$, $v$ and asymptotic speeds of $g$, $h$ are provided. According to the results above, Section 6 makes a summary and gives some practical significance.

\section{Preliminaries}
We first consider a diffusive logistic problem with a free boundary
\begin{equation}\label{27}
\begin{cases}
\omega_t-d\omega_{xx}=\omega(\theta-\omega),~~~~&t>0,~0<x<s(t),\\
\omega_x(t,0)=0,~\omega(t,s(t))=0,           &t\geq0,\\
s^{\prime}(t)=-\beta\omega_x(t,s(t)),      &t\geq0,\\
\omega(0,x)=\omega_0(x),~s(0)=s_0,           &0\leq x\leq s_0,
\end{cases}
\end{equation}
where $d$, $\theta$, $\beta$, $s_0$ are positive constants.
\begin{proposition}\label{p2}{\upshape(\cite{Du1})}
The problem (\ref{27}) has a unique global solution $(\omega,s)$ and $\lim\limits _{t\rightarrow\infty}s(t)=s_{\infty}$ exists. Besides,

{\upshape(i)} if $s_0\geq\frac{\pi}{2}\sqrt{\frac{d}{\theta}}$, then $s_{\infty}=\infty$ for all $\beta>0$;

{\upshape(ii)} if $s_0<\frac{\pi}{2}\sqrt{\frac{d}{\theta}}$, then there is a positive constant $\beta(d,\theta,s_0,\omega_0)$ such that $s_{\infty}=\infty$ if $\beta>\beta(d,\theta,s_0,\omega_0)$, and $s_{\infty}<\infty$ if $\beta<\beta(d,\theta,s_0,\omega_0)$;

{\upshape(iii)} if $s_{\infty}=\infty$, then $\lim\limits_{t\rightarrow\infty}\omega(t,x)=\theta$ uniformly in any compact subset of $[0,\infty)$.
\end{proposition}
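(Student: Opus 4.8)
My plan is to follow the by-now classical programme for one-phase Stefan problems of logistic type.

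\textbf{Existence, a priori bounds, monotonicity.} First I would straighten the front by the change of variables $y=x/s(t)$, which recasts (\ref{27}) as a uniformly parabolic problem on the fixed interval $(0,1)$ with coefficients depending on $s$ and $s'$; a contraction-mapping argument in a parabolic H\"older space, combined with $L^{p}$ and Schauder estimates, yields a unique local solution $(\omega,s)$ with the stated regularity. The maximum principle gives $0<\omega(t,x)\le\max\{\theta,\|\omega_0\|_\infty\}$, and comparison near the front with a function of the form $M\big(2\kappa(s(t)-x)-\kappa^{2}(s(t)-x)^{2}\big)$ bounds $-\omega_x(t,s(t))$, hence $s'(t)$, uniformly; these a priori bounds allow continuation to all $t>0$. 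By Hopf's lemma $\omega_x(t,s(t))<0$, so $s'(t)>0$, $s$ is strictly increasing, and $s_\infty:=\lim_{t\to\infty}s(t)\in(s_0,\infty]$ exists.

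\textbf{The dichotomy lemma.} The heart of the matter is the claim: \emph{if $s_\infty<\infty$, then $\|\omega(t,\cdot)\|_{C([0,s(t)])}\to0$ and $s'(t)\to0$ as $t\to\infty$.} After straightening, interior--boundary parabolic estimates give uniform $C^{(1+\alpha)/2,\,1+\alpha}$ bounds on $\omega$ and $C^{1+\alpha/2}$ bounds on $s$ on $[1,\infty)$; since $\int_0^\infty s'=s_\infty-s_0<\infty$ and $s'$ is uniformly continuous, $s'(t)\to0$, i.e. $\omega_x(t,s(t))\to0$. If $\omega(t_n,\cdot)\not\to0$ along some $t_n\to\infty$, then passing to a limit of the time-translates produces a nonnegative, nontrivial bounded entire solution $w$ of $w_t-dw_{xx}=w(\theta-w)$ on the half-strip $(0,s_\infty)$ with $w_x(\cdot,0)=0$, $w(\cdot,s_\infty)=0$ and $w_x(\cdot,s_\infty)=0$; the strong maximum principle forces $w>0$ inside, whence $w_x(\cdot,s_\infty)<0$ by Hopf, a contradiction.

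\textbf{Proof of (i), (iii), (ii).} For (i): if $s_0\ge\tfrac\pi2\sqrt{d/\theta}$ but, for contradiction, $s_\infty<\infty$, pick $\rho\in(\tfrac\pi2\sqrt{d/\theta},s_\infty)$ (possible since $s_\infty>s_0$). For $t$ large, $s(t)>\rho$ and $\omega(t,\rho)\ge0$, so on $(0,\rho)$ the function $\omega$ is a supersolution of the Dirichlet logistic problem there; its solution issued from suitable small data converges to the unique positive steady state because the principal eigenvalue $d(\pi/2\rho)^2<\theta$, contradicting $\omega\to0$ from the lemma. For (iii): when $s_\infty=\infty$, $\omega$ is bounded above by the solution of $\bar W'=\bar W(\theta-\bar W)$, which $\to\theta$, and, for each $L>\tfrac\pi2\sqrt{d/\theta}$ and $t$ large enough that $s(t)\ge L$, below by the Dirichlet logistic solution on $(0,L)$, which $\to\Theta_L$, the positive steady state on $(0,L)$; since $\Theta_L\to\theta$ locally uniformly as $L\to\infty$, $\omega(t,x)\to\theta$ uniformly on compacts. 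For (ii), small $\beta$: fix $\sigma\in(s_0,\tfrac\pi2\sqrt{d/\theta})$, set $\gamma:=d(\pi/2\sigma)^2-\theta>0$, choose $\eta>0$ with $s_0(1+\eta)\le\sigma$, and verify that $\bar s(t)=s_0\big(1+\eta-\tfrac\eta2 e^{-\gamma t}\big)$ and $\bar\omega(t,x)=M e^{-\gamma t}\cos\!\big(\tfrac{\pi x}{2\bar s(t)}\big)$ form a supersolution of (\ref{27}): one gets $\bar\omega_t-d\bar\omega_{xx}\ge\theta\bar\omega\ge\bar\omega(\theta-\bar\omega)$ because $\bar s\le\sigma$ and $\bar s'\ge0$; $\bar\omega(0,\cdot)\ge\omega_0$ once $M$ is large; and $\bar s'(t)\ge-\beta\bar\omega_x(t,\bar s(t))$, i.e. $s_0\eta\gamma\bar s(t)\ge\beta\pi M$, once $\beta\le\beta(d,\theta,s_0,\omega_0)$ is small. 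The comparison principle then gives $s(t)\le\bar s(t)<\sigma$, so $s_\infty<\infty$. For large $\beta$: an explicit subsolution (as in \cite{Du1}) whose front is prescribed to cross $\tfrac\pi2\sqrt{d/\theta}$ within a finite time, with boundary flux bounded away from $0$ so that its Stefan condition reduces to $\underline s'\le\beta\cdot(\text{const})$ for $\beta$ large, forces $s(T)>\tfrac\pi2\sqrt{d/\theta}$ at some finite $T$; restarting (\ref{27}) at $t=T$ and applying (i) yields $s_\infty=\infty$. Finally, the comparison principle makes $s_\infty$ nondecreasing in $\beta$, so $\{\beta:s_\infty<\infty\}=(0,\beta^*)$ or $(0,\beta^*]$ with $\beta^*=\beta(d,\theta,s_0,\omega_0)$, and the two constructions show $\beta^*\in(0,\infty)$.

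\textbf{Main obstacle.} The two genuinely delicate points are the dichotomy lemma above — which hinges on uniform-in-time parabolic estimates after straightening and on the compactness/limiting argument excluding a residual profile — and the large-$\beta$ subsolution, where the drift generated by a fast-moving front must be absorbed near $x=\underline s(t)$; this is arranged by starting the comparison from a small positive time $t_1>0$, at which $\omega(t_1,\cdot)>0$ on $[0,s(t_1)]$ and $\omega_x(t_1,s(t_1))<0$, rather than from $t=0$ where $\omega_0$ may be tiny near $s_0$. Everything else is comparison-principle bookkeeping.
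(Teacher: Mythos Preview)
Your proposal is correct and reconstructs, in outline, the original argument of Du and Lin. Note, however, that the present paper does \emph{not} give its own proof of this proposition: it is stated with the attribution ``(\cite{Du1})'' and used as a black box, so there is no in-paper proof to compare against beyond the citation itself. Your sketch is therefore strictly more than what the paper supplies.

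A few remarks on the proposal itself. The dichotomy step you describe (uniform parabolic estimates after straightening, then a compactness/limit argument ruling out a nontrivial entire limit via Hopf) is the modern streamlined version; the original \cite{Du1} argues slightly differently but to the same end, and the version you wrote is closer to the formulation used in later papers such as \cite{wang12}. Your supersolution for small $\beta$ and the monotonicity-in-$\beta$ argument for the sharp threshold are exactly the Du--Lin constructions. The large-$\beta$ subsolution you allude to is the one genuinely technical point you leave implicit; in \cite{Du1} it is built from a suitably scaled and shifted principal Dirichlet eigenfunction on a moving interval, and your observation that one should initiate the comparison at a positive time $t_1$ (so that $\omega(t_1,\cdot)$ is strictly positive up to the front with strictly negative slope there) is the correct way to make the ordering of initial data work. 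Finally, the proposition as stated leaves the borderline case $\beta=\beta(d,\theta,s_0,\omega_0)$ open, so your ``$(0,\beta^*)$ or $(0,\beta^*]$'' is adequate here; the resolution of that endpoint is a later result and not needed.
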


In addition, it has proved that the expanding front $s(t)$ moves at a constant speed for large time, i.e., $s(t)=(c+o(1))t$ as $t\rightarrow\infty$ in \cite{Du1}. And the spreading speed $c$ is determined by the following auxiliary elliptic problem
\begin{equation}\label{3}
\left\{
\begin{array}{ll}
dq^{\prime\prime}-cq^{\prime}+q(\theta-q)=0,~~0<y<\infty,\\
q(0)=0,~~q^{\prime}(0)=c/\beta,~~q(\infty)=\theta,\\
c\in(0,2\sqrt{\theta d});~~q^{\prime}(y)>0,~~0<y<\infty,
\end{array}
\right.
\end{equation}
where $d$, $c$, $\theta$, $\beta$ are given positive constants.

\begin{proposition}\label{p1}\upshape{(\citep{Du6})}
The problem (\ref{3}) has a unique solution $(q(y),c)$ and $c(\beta,d,\theta)$ is strictly increasing in $\beta$ and $\theta$, respectively. Moreover,
\begin{equation}\label{4}
\lim\limits_{\frac{\theta\beta}{d}\rightarrow\infty}\frac{c(\beta,d,\theta)}{\sqrt{\theta d}}=2,~~\lim\limits_{\frac{\theta\beta}{d}\rightarrow 0}\frac{c(\beta,d,\theta)}{\sqrt{\theta d}}\frac{d}{\theta\beta}=\frac{1}{\sqrt{3}}.
\end{equation}
\end{proposition}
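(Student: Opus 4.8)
The plan is to remove the parameters by rescaling and then reduce the whole statement to a phase--plane analysis of a single one--parameter family of semi--wave problems. Putting $Q(z)=q(y)/\theta$ with $z=\sqrt{\theta/d}\,y$, problem (\ref{3}) becomes
\begin{gather*}
Q''-\tilde c\,Q'+Q(1-Q)=0,\qquad Q(0)=0,\qquad Q'(0)=\tilde c/\tilde\beta,\\
Q(\infty)=1,\qquad \tilde c\in(0,2),\qquad Q'>0,
\end{gather*}
where $\tilde c=c/\sqrt{\theta d}$ and $\tilde\beta=\theta\beta/d$. Hence it suffices to show that for every $\tilde\beta>0$ this normalized problem has a unique solution $(Q,\tilde c)$, with $\tilde c=\phi(\tilde\beta)$ for some strictly increasing $\phi\colon(0,\infty)\to(0,2)$ satisfying $\phi(\tilde\beta)\to2$ as $\tilde\beta\to\infty$ and $\phi(\tilde\beta)/\tilde\beta\to1/\sqrt3$ as $\tilde\beta\to0$. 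Indeed, undoing the scaling gives $c(\beta,d,\theta)=\sqrt{\theta d}\,\phi(\theta\beta/d)$, from which $\partial_\beta c>0$ and $\partial_\theta c>0$ follow at once since $\phi>0$, $\phi'>0$ and $\beta>0$, while the two limits in (\ref{4}) are exactly the stated limits of $\phi$.

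Writing $P=Q'$ turns the equation into the autonomous system $Q'=P$, $P'=\tilde c\,P-Q(1-Q)$. For $\tilde c\in(0,2)$ the point $(1,0)$ is a saddle and $(0,0)$ is an unstable focus (the linearizations have eigenvalues $(\tilde c\pm\sqrt{\tilde c^2+4})/2$ and $(\tilde c\pm\sqrt{\tilde c^2-4})/2$, respectively). A solution of the normalized problem is precisely an orbit that tends to $(1,0)$ as $z\to\infty$ while $Q$ increases from $0$; such an orbit must lie on the branch of the stable manifold $W^s(1,0)$ that leaves $(1,0)$ into the strip $\{0<Q<1,\ P>0\}$, tangent to the stable eigenvector $(1,\lambda_-)$ with $\lambda_-=(\tilde c-\sqrt{\tilde c^2+4})/2<0$. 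Tracing this branch backward in $z$, I would use a trapping--region argument to show that $P$ stays positive and $Q$ decreases monotonically; the orbit cannot converge to $(0,0)$ since a focus admits no orbit approaching it monotonically in $Q$, so it must cross the segment $\{Q=0\}$, and does so exactly once, at a point $(0,P_0(\tilde c))$ with $P_0(\tilde c)>0$. Thus for each $\tilde c\in(0,2)$ there is a unique candidate wave, represented as a graph $P=P(Q;\tilde c)$ over $[0,1]$ with $P(1)=0$, $P>0$ on $(0,1)$ and $P(0)=P_0(\tilde c)$, and the boundary condition $Q'(0)=\tilde c/\tilde\beta$ becomes $\tilde\beta=\tilde c/P_0(\tilde c)=:\Psi(\tilde c)$.

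Along $W^s(1,0)$ the function $P(Q;\tilde c)$ solves $dP/dQ=\tilde c-Q(1-Q)/P$. If $\tilde c_1<\tilde c_2$, then near $Q=1$ one has $P(Q;\tilde c)\sim|\lambda_-(\tilde c)|\,(1-Q)$ and $|\lambda_-|$ is strictly decreasing in $\tilde c$, so $P(\cdot;\tilde c_2)<P(\cdot;\tilde c_1)$ just to the left of $1$; moreover, wherever the two graphs meet, $\tfrac{d}{dQ}\big(P(\cdot;\tilde c_2)-P(\cdot;\tilde c_1)\big)=\tilde c_2-\tilde c_1>0$, so, reading $Q$ from $1$ down to $0$, the inequality can never be lost. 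Hence $P_0$ is strictly decreasing and $\Psi(\tilde c)=\tilde c/P_0(\tilde c)$ is strictly increasing. For the endpoints: as $\tilde c\to0^+$, continuous dependence on $\tilde c$ together with the conserved energy $H(Q,P)=\tfrac12P^2+\tfrac12Q^2-\tfrac13Q^3$ of the limiting Hamiltonian system (whose separatrix through the saddle $(1,0)$ lies on $H=H(1,0)=\tfrac16$) yields $P_0(\tilde c)\to\sqrt{2H(1,0)}=1/\sqrt3$, whence $\Psi(\tilde c)\to0$ and $\Psi(\tilde c)/\tilde c=1/P_0(\tilde c)\to\sqrt3$; as $\tilde c\to2^-$ the equilibrium at the origin degenerates into a node and $W^s(1,0)$ converges to the monotone heteroclinic connecting $(0,0)$ and $(1,0)$, forcing $P_0(\tilde c)\to0$ and hence $\Psi(\tilde c)\to\infty$. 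Therefore $\Psi$ is a strictly increasing bijection of $(0,2)$ onto $(0,\infty)$; its inverse $\phi$ is strictly increasing, $\phi(\tilde\beta)\to2$ as $\tilde\beta\to\infty$, and $\phi(\tilde\beta)/\tilde\beta=P_0(\phi(\tilde\beta))\to1/\sqrt3$ as $\tilde\beta\to0$, which by the first paragraph gives uniqueness, monotonicity and (\ref{4}).

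The rescaling and the comparison inequality for $P(Q;\tilde c)$ are routine; the genuine work lies in the qualitative phase--plane analysis — showing that for every $\tilde c\in(0,2)$ the relevant branch of $W^s(1,0)$ really does exit through $\{Q=0\}$ with $P_0(\tilde c)>0$ (this is where the hypothesis $\tilde c<2$, i.e.\ the focus structure at the origin, is essential) and controlling the limit $\tilde c\to2^-$, where the equilibrium degenerates and one must verify $P_0(\tilde c)\to0$, equivalently $\Psi(\tilde c)\to\infty$.
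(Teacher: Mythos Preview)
The paper does not give its own proof of this proposition: it is quoted as a known result from \cite{Du6}, so there is no in--paper argument to compare against. Your sketch is correct and is in fact the standard route taken in that literature --- the rescaling to a one--parameter family, the reduction of the semi--wave to the stable--manifold graph $P=P(Q;\tilde c)$, the ODE comparison showing $P_0(\tilde c)$ is strictly decreasing, and the identification of the two endpoint limits via the Hamiltonian first integral at $\tilde c=0$ and the critical KPP wave at $\tilde c=2$. The computation $c(\beta,d,\theta)=\sqrt{\theta d}\,\phi(\theta\beta/d)$ with $\phi>0$, $\phi'>0$ correctly yields strict monotonicity in both $\beta$ and $\theta$, and the two limits in (\ref{4}) are exactly $\phi(\infty)=2$ and $\phi(\tilde\beta)/\tilde\beta\to P_0(0^+)=1/\sqrt3$. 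The only places where more detail would be expected in a full write--up are the ones you flag yourself: the trapping--region argument ensuring $W^s(1,0)$ exits through $\{Q=0,\ P>0\}$ for every $\tilde c\in(0,2)$, and the continuity/limit argument at $\tilde c\to 2^-$.
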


\begin{proposition}{\upshape(\cite{wang5})}\label{p3}
Let $(q,c)$ be the unique solution of (\ref{3}). If $(\omega,s)$ is a solution of (\ref{27}) for which spreading occurs, then there exists $H\in\mathbb{R}$ such that
$$
\lim\limits_{t\rightarrow\infty}(s(t)-ct-H)=0,~\lim\limits_{t\rightarrow\infty}s^{\prime}(t)=c,
$$
$$
\lim\limits_{t\rightarrow\infty}||\omega(t,x)-q(ct+H-x)||_{L^{\infty}([0,s(t)])}=0.
$$

\end{proposition}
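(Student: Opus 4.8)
\noindent The plan is to follow the standard ``semi-wave plus comparison'' strategy, upgrading the coarse asymptotics already available for (\ref{27}) --- namely $s(t)=(c+o(1))t$ from \cite{Du1} and $\omega(t,\cdot)\to\theta$ locally uniformly from Proposition~\ref{p2}(iii) --- to the sharp form claimed. First I would record the properties of the semi-wave $q$ of Proposition~\ref{p1} that power all the estimates: linearising $dq''-cq'+q(\theta-q)=0$ at $q=\theta$ shows that $\theta-q(y)$ and $q'(y)$ decay exponentially as $y\to\infty$, while $q'>0$ on $[0,\infty)$ and $\beta q'(0)=c$; moreover the continuous dependence of $(q,c)$ on $\theta$ (Proposition~\ref{p1}) supplies, for small $\epsilon>0$, companion semi-waves $(q^{\pm}_\epsilon,c^{\pm}_\epsilon)$ for the parameter $\theta\pm\epsilon$, with $c^{\pm}_\epsilon\to c$ as $\epsilon\to0$. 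I would also invoke the comparison principle for (\ref{27}) (cf. \cite{Du1,wang5}), whose left-boundary compatibility demands $\bar\omega_x(t,0)\le0$ for supersolutions and $\underline\omega_x(t,0)\ge0$ for subsolutions.

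\noindent\textbf{Barriers.} The translating profile $\bar\omega(t,x)=q^{+}_\epsilon(c^{+}_\epsilon t+C-x)$ on $0\le x\le c^{+}_\epsilon t+C$ is a supersolution of (\ref{27}): the $q^{+}_\epsilon$-equation leaves the reaction surplus $\epsilon\,q^{+}_\epsilon\ge0$, $\bar\omega_x(t,0)=-(q^{+}_\epsilon)'(\cdot)<0$, and the Stefan condition holds with equality since $c^{+}_\epsilon=\beta(q^{+}_\epsilon)'(0)$. After dominating the data at a large time $T$ --- here one uses that $\omega(T,\cdot)\le\theta+\epsilon$ throughout (a standard consequence of spreading), $s(T)-cT=o(T)$, and the front-slope bound $|\omega_x(T,s(T))|\le\mathrm{const}$ of Theorem~\ref{th3}, which together let the profile be placed above $\omega(T,\cdot)$ for a suitable finite $C$ --- the comparison principle yields $s(t)\le c^{+}_\epsilon t+C$ and $\omega(t,x)\le q^{+}_\epsilon(c^{+}_\epsilon t+C-x)$ for $t\ge T$, hence (on $\epsilon\downarrow0$) $\limsup_{t\to\infty}s(t)/t\le c$ and a one-sided control of the profile near the front. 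The symmetric truncated subsolution $\underline\omega(t,x)=(1-\epsilon)q^{-}_\epsilon(\underline s(t)-x)$ with $\underline s'(t)=(1-\epsilon)c^{-}_\epsilon$ --- the reduced speed forced by the Stefan inequality $\underline s'(t)\le\beta(1-\epsilon)(q^{-}_\epsilon)'(0)$, and the mild Neumann defect at $x=0$ harmless because there $\underline\omega<\omega$ (their plateaux differing, $(1-\epsilon)(\theta-\epsilon)<\theta$) --- delivers the matching bound $\liminf_{t\to\infty}s(t)/t\ge c$ and a lower profile control.

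\noindent\textbf{Convergence, profile and $s'(t)\to c$.} The decisive --- and hardest --- step is to promote these rough rate bounds to the sharp statement $s(t)-ct\to H$ for a single $H$, which comparison alone cannot furnish. I would run a zero-number (intersection-number) argument in the spirit of \cite{Du6,wang5}: for each fixed $a$ the number of intersection points of $x\mapsto\omega(t,x)$ with the translate $x\mapsto q(ct+a-x)$ is finite and non-increasing in $t$; together with the strict monotonicity of $q$ and the exponential decay recorded above, this eventually assigns a definite sign to $\omega(t,x)-q(ct+a-x)$ near the front for every $a$, and the threshold value $H$ separating the two signs is simultaneously the limit of $s(t)-ct$ and the correct profile shift. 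Re-running the squeeze with the common shift $H+o(1)$ then gives $\|\omega(t,\cdot)-q(ct+H-\cdot)\|_{L^{\infty}([0,s(t)])}\to0$, and finally $s'(t)\to c$ follows from $s'(t)=-\beta\omega_x(t,s(t))$: after straightening the front by the change of variable $y=s(t)-x$ the problem becomes uniformly parabolic on a fixed half-line, so parabolic estimates upgrade the profile convergence to $\omega_x(t,s(t))\to-q'(0)=-c/\beta$. I expect this zero-number step --- ruling out oscillation of the free boundary --- to be the main obstacle; the data domination and the tuning of the barrier shifts are secondary technical burdens.
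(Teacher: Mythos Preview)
The paper does not prove this proposition at all: it is stated as a quoted result from \cite{wang5} (note the citation attached to the proposition header) and is used only as a black box later in Sections~4 and~5. There is therefore no ``paper's own proof'' to compare your attempt against.

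That said, your sketch is a faithful outline of how the result is actually established in the cited literature (Du--Matsuzawa--Zhou and the Wang references): perturbed semi-wave barriers $(q^{\pm}_\epsilon,c^{\pm}_\epsilon)$ for the coarse rate, an intersection-number/zero-number argument to pin down the single shift $H$ and kill oscillation of $s(t)-ct$, and parabolic regularity after straightening the front to recover $s'(t)\to c$. One technical point to watch in your subsolution: the construction $\underline\omega(t,x)=(1-\epsilon)q^{-}_\epsilon(\underline s(t)-x)$ does \emph{not} satisfy $\underline\omega_x(t,0)\ge0$ (in fact $\underline\omega_x(t,0)<0$), so you cannot invoke the standard Neumann comparison directly at $x=0$; the cited proofs handle this either by working on a large bounded interval where $\omega$ is already close to $\theta$ and using a Dirichlet-type comparison with a strict ordering at the left endpoint, or by adding a small corrector to the profile. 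Your parenthetical remark that the plateau gap ``$(1-\epsilon)(\theta-\epsilon)<\theta$'' makes the Neumann defect harmless is not by itself sufficient --- one needs a genuine barrier argument or a modified comparison lemma there.
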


\section{Longtime behaviors of $u$ and $v$}
Since the two free boundaries $h(t)$ and $g(t)$ are monotonically increasing and may intersect each other, there exist four cases: (i) $h_{\infty}=g_{\infty}=\infty$; (ii) $h_{\infty}=\infty$ and $g_{\infty}<\infty$; (iii) $h_{\infty}<\infty$ and $g_{\infty}=\infty$; (iv) $h_{\infty}<\infty$ and $g_{\infty}<\infty$. In this section, we will study the limits of $u(t,x)$ and $v(t,x)$ as $t\rightarrow\infty$ under the above four cases.

\begin{theorem}\label{th6}
Assume that $h_{\infty}<\infty$ $(g_{\infty}<\infty)$. Then
$$
\lim\limits_{t\rightarrow\infty}\max\limits_{0\leq x\leq h(t)}u(t,x)=0,~(\lim\limits_{t\rightarrow\infty}\max\limits_{0\leq x\leq g(t)}v(t,x)=0).
$$
\end{theorem}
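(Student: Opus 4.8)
I would prove the assertion for $u$ under the hypothesis $h_\infty<\infty$; the assertion for $v$ under $g_\infty<\infty$ follows verbatim after replacing $(u,h,1)$ by $(v,g,d)$ and noting that the growth factor $1-v+\frac{cu}{u+mv}$ lies in $[1-M_2,1+c]$, hence is bounded exactly as $\lambda-u-\frac{bv}{u+mv}\in[\lambda-M_1-\frac bm,\lambda]$; in particular the (possibly vanishing, possibly non-smooth across $x=g(t)$) predator term only ever enters as a bounded coefficient. The plan is to argue by contradiction via a translation/compactness limit, for which two preliminary facts are needed: a uniform-in-time regularity bound for $u$ up to the free boundary, and the implication $h_\infty<\infty\Rightarrow h'(t)\to0$.

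\noindent\emph{Preliminary facts.} Straightening the free boundary by $y=x/h(t)$ turns the $u$-equation into a uniformly parabolic equation for $w(t,y):=u(t,yh(t))$ on the fixed interval $(0,1)$, whose coefficients are controlled by the bounds $h_0\le h(t)<h_\infty$, $0\le h'(t)\le M_3$ of Theorem \ref{th3} and whose zeroth-order term $\lambda-u-\frac{bv}{u+mv}$ is bounded. Parabolic $L^p$ estimates on the time strips $[t,t+1]$, followed by Sobolev embedding, then yield a constant $C$ with $\|u(t,\cdot)\|_{C^{1+\alpha}([0,h(t)])}\le C$ for all $t\ge1$. Consequently $h'(t)=-\mu u_x(t,h(t))$ is bounded and uniformly H\"older continuous on $[1,\infty)$; since $h$ is nondecreasing with $\int_1^\infty h'(s)\,ds=h_\infty-h(1)<\infty$, and a nonnegative uniformly continuous function with finite integral over $[1,\infty)$ must tend to $0$, we obtain $h'(t)\to0$ and hence $u_x(t,h(t))\to0$. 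I expect this implication to be the main obstacle: it is the only place the hypothesis $h_\infty<\infty$ is used, and without it the limit problem constructed below would carry an unknown flux at its boundary and the final contradiction would collapse.

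\noindent\emph{The contradiction argument.} Suppose $\limsup_{t\to\infty}\max_{[0,h(t)]}u(t,\cdot)=2\delta>0$. From $u(t,h(t))=0$ and $\|u_x(t,\cdot)\|_\infty\le C$ one gets $u(t,x)\le C\,(h(t)-x)\le\delta$ whenever $x\ge h(t)-\delta/C$, so the near-maximizers stay off the free boundary: there exist $t_n\to\infty$ and $x_n\in[0,h(t_n)-\delta/C]\subset[0,h_\infty-\delta/C]$ with $u(t_n,x_n)\to2\delta$, and, along a subsequence, $x_n\to x_0\in[0,h_\infty-\delta/C]$. Put $u_n(t,x):=u(t+t_n,x)$, defined for $t>-t_n$; it solves $u_{n,t}-u_{n,xx}=c_nu_n$ with $|c_n|\le\lambda+M_1+\frac bm$. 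By the preliminary regularity and interior parabolic estimates, a subsequence converges, in $C^1$ up to $x=h_\infty$ and locally uniformly in $t$, to a nonnegative eternal solution $U$ on $\mathbb R\times[0,h_\infty]$ of $U_t-U_{xx}=c_\infty U$ on $\mathbb R\times(0,h_\infty)$ (in the strong sense, with $c_\infty$ bounded) satisfying $U_x(t,0)=0$. Because $h(t+t_n)\to h_\infty$, $h'(t+t_n)\to0$, and the convergence is uniform up to the boundary, $U(t,h_\infty)=0$ and $U_x(t,h_\infty)=0$ for every $t$; moreover $U(0,x_0)=\lim_n u_n(0,x_n)\ge2\delta>0$, so $U\not\equiv0$. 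The strong maximum principle then forces $U(t,x)>0$ for all $t\in\mathbb R$ and $x\in(0,h_\infty)$, and Hopf's boundary lemma at $(0,h_\infty)$ gives $U_x(0,h_\infty)<0$, contradicting $U_x(0,h_\infty)=0$. Hence $\max_{[0,h(t)]}u(t,\cdot)\to0$, which is the claim for $u$.
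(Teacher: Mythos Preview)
Your argument is correct and is exactly the standard compactness-plus-Hopf scheme that underlies \cite[Lemma~4.1]{wang12}, to which the paper defers entirely (invoking Theorem~\ref{th3} only to supply the a~priori bounds that feed into that lemma). One small refinement: rather than asserting a limiting equation $U_t-U_{xx}=c_\infty U$ with an unspecified $c_\infty$, it is cleaner to pass the one-sided inequality $u_{n,t}-u_{n,xx}+Ku_n\ge 0$ (with $K=M_1+b/m-\lambda>0$) to the limit, since this is all the strong maximum principle and Hopf's lemma at a zero minimum require.
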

\begin{proof}
The conclusion can be deduced by Theorem \ref{th3} and \cite[Lemma 4.1]{wang12} directly.
\end{proof}

Similar to \cite[Theorem 3.2]{liu2020free}, we can prove Theorem \ref{th7} as follows. So we omit the proof.
\begin{theorem}\label{th7}
Assume that $h_{\infty}=g_{\infty}=\infty$. Then $\lim\limits_{t\rightarrow\infty}(u(t,x),v(t,x))$ is determined by
\begin{equation}\label{90}
\begin{cases}
\lambda-u-\frac{bv}{u+mv}=0,\\
1-v+\frac{cu}{u+mv}=0.
\end{cases}
\end{equation}
Further calculations give, when $0<m\lambda-b<b/c$,
$$\lim\limits_{t\rightarrow\infty}u(t,x)=u^*:=\frac{A+\sqrt{\Delta_1}}{2(b +c m^2)},~\lim\limits_{t\rightarrow\infty}v(t,x)=v^*:=\frac{u^*(\lambda-u^*)}{b-m(\lambda-u^*)},$$
where $A=\lambda(2cm^2+b)-mb(1+2c)$,
$\Delta_1=A^2+4(b+cm^2)[b(1+c)-mc\lambda](m\lambda-b)$.
\end{theorem}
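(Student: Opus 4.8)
The plan is to identify $\lim_{t\to\infty}(u(t,x),v(t,x))$ by a monotone squeezing scheme: trap $u$ and $v$ between a nonincreasing family of constant upper bounds and a nondecreasing family of constant lower bounds, obtained by repeatedly comparing the two equations of \eqref{Q} with the diffusive logistic free boundary problem \eqref{27} and invoking Proposition \ref{p2}(iii); then pass to the limit in the trapping constants and show, using $0<m\lambda-b<b/c$, that the two families meet, after which \eqref{90} is solved explicitly.

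First I record the monotonicity of the response terms on $\{u>0,\,v>0\}$: $\phi(u,v):=\frac{bv}{u+mv}$ is nonincreasing in $u$, nondecreasing in $v$, with $0\le\phi<\frac bm$, while $\psi(u,v):=\frac{cu}{u+mv}$ is nondecreasing in $u$, nonincreasing in $v$, with $0\le\psi<c$. Since $\phi\ge0$, $u_t-u_{xx}\le u(\lambda-u)$ on $(0,h(t))$; as $h_\infty=\infty$, comparison with \eqref{27} (diffusion $1$, carrying capacity $\lambda$) and Proposition \ref{p2}(iii) give $\limsup_{t\to\infty}u\le\lambda$ locally uniformly in $x$, while $\phi<\frac bm$ together with $m\lambda>b$ gives, by comparison with the same problem for carrying capacity $\lambda-\frac bm>0$, that $\liminf_{t\to\infty}u\ge\lambda-\frac bm$; similarly $0\le\psi<c$ and $g_\infty=\infty$ give $1\le\liminf_{t\to\infty}v\le\limsup_{t\to\infty}v\le1+c$. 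These are the starting bounds $\underline u_1=\tfrac{}{}\lambda-\tfrac bm,\ \overline u_1=\lambda,\ \underline v_1=1,\ \overline v_1=1+c$. Assuming inductively that $\underline u_k\le\liminf u\le\limsup u\le\overline u_k$ and $\underline v_k\le\liminf v\le\limsup v\le\overline v_k$ locally uniformly, I feed these bounds (with an arbitrarily small margin, valid for large $t$ on any fixed interval) into $\phi$ and $\psi$, using their monotonicity, so that each equation is trapped between logistic equations; comparison and Proposition \ref{p2}(iii), followed by letting the margin tend to $0$, produce the refined bounds
\[
\overline u_{k+1}=\lambda-\frac{b\underline v_k}{\overline u_k+m\underline v_k},\quad \underline u_{k+1}=\lambda-\frac{b\overline v_k}{\underline u_k+m\overline v_k},\quad \overline v_{k+1}=1+\frac{c\overline u_k}{\overline u_k+m\underline v_k},\quad \underline v_{k+1}=1+\frac{c\underline u_k}{\underline u_k+m\overline v_k}.
\]
A short simultaneous induction (each right-hand side is increasing in its ``own'' variable and decreasing in the ``opposite'' one, and all the $u$-constants remain in $(0,\lambda]$ and all the $v$-constants in $[1,1+c]$) shows $\{\overline u_k\},\{\overline v_k\}$ nonincreasing and $\{\underline u_k\},\{\underline v_k\}$ nondecreasing, so all four converge to limits $\overline u\ge\underline u>0$ and $\overline v\ge\underline v>0$ satisfying the four equations obtained by dropping the subscripts.

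The decisive step — and the one I expect to be the main obstacle — is to show $\overline u=\underline u$ and $\overline v=\underline v$. Subtracting the two $u$-equations and, separately, the two $v$-equations, the mixed $m$-terms in the numerators cancel, so both differences equal $(\overline u\,\overline v-\underline u\,\underline v)/D$ up to the constants $b$ and $cm$, where $D:=(\underline u+m\overline v)(\overline u+m\underline v)>0$; in particular $\overline v-\underline v=\tfrac{cm}{b}(\overline u-\underline u)$. Writing $\overline u\,\overline v-\underline u\,\underline v=\big(\overline v+\tfrac{cm}{b}\underline u\big)(\overline u-\underline u)$ and substituting into $\overline u-\underline u=\tfrac bD(\overline u\,\overline v-\underline u\,\underline v)$ gives $(\overline u-\underline u)\big(D-b\overline v-cm\underline u\big)=0$, and the four equations reduce $D-b\overline v-cm\underline u$ to $(\underline u+m\overline v)(\overline u+\underline u+m-\lambda)$; it then remains to exclude $\overline u+\underline u=\lambda-m$ using the a priori ranges $\underline u>\lambda-\tfrac bm$, $\overline u<\lambda$, $\overline v\le1+c$ and the two-sided bound $0<m\lambda-b<b/c$, which is precisely what forces $\overline u=\underline u$ and then $\overline v=\underline v$. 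Hence $(\bar u,\bar v):=(\overline u,\overline v)$ is a positive solution of \eqref{90}. Finally, the first equation of \eqref{90} gives $\frac{v}{u+mv}=\frac{\lambda-u}{b}$, so $v=\frac{u(\lambda-u)}{b-m(\lambda-u)}$ and $\frac{u}{u+mv}=\frac{b-m(\lambda-u)}{b}$; inserting these into the second equation and clearing denominators reduces everything to
\[
(b+cm^2)u^2-Au+(b-m\lambda)\big(b(1+c)-cm\lambda\big)=0,\qquad A=\lambda(2cm^2+b)-mb(1+2c).
\]
Under $0<m\lambda-b<b/c$ the constant term is negative (its two factors have opposite signs), so this quadratic has a unique positive root $u^*=\frac{A+\sqrt{\Delta_1}}{2(b+cm^2)}$ with $\Delta_1$ as in the statement; evaluating the quadratic at $u=\lambda-\tfrac bm$ and at $u=\lambda$ shows $u^*\in(\lambda-\tfrac bm,\lambda)$, hence $v^*=\frac{u^*(\lambda-u^*)}{b-m(\lambda-u^*)}>0$, and since the squeeze is locally uniform in $x$ we obtain $\lim_{t\to\infty}(u(t,x),v(t,x))=(u^*,v^*)$.
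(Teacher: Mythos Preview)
Your overall architecture --- iterated upper/lower comparison with the logistic free-boundary problem \eqref{27}, Proposition~\ref{p2}(iii), then passage to the limit in the resulting four coupled recursions --- is exactly the scheme the paper has in mind; the paper itself omits the argument and refers to \cite[Theorem 3.2]{liu2020free}, which is this same monotone squeezing method. Your reduction of \eqref{90} to the quadratic $(b+cm^2)u^2-Au+(b-m\lambda)\big(b(1+c)-cm\lambda\big)=0$ is correct, and under $0<m\lambda-b<b/c$ the constant term is indeed negative, giving the unique positive root $u^*$ and then $v^*>0$ as stated.

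The genuine gap is in your ``decisive step''. You correctly derive $(\overline u-\underline u)\,(\underline u+m\overline v)\,(\overline u+\underline u+m-\lambda)=0$ and hence need to exclude $\overline u+\underline u=\lambda-m$, but you do not actually do so: the sentence ``using the a priori ranges \dots and the two-sided bound $0<m\lambda-b<b/c$, which is precisely what forces $\overline u=\underline u$'' is an assertion, not an argument. The crude bounds $\underline u\ge\lambda-b/m$, $\overline u\le\lambda$, $1\le\underline v\le\overline v\le1+c$ by themselves do \emph{not} rule it out: for instance with $m=1$, $b=2$, $c=1$, $\lambda=5/2$ one has $0<m\lambda-b=1/2<2=b/c$, yet $\lambda-m=3/2$ and $\lambda-b/m=1/2$, so $\underline u\in[1/2,3/4]$, $\overline u\in[3/4,3/2]$ is not excluded a priori. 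To close the argument you must bring in the remaining two fixed-point equations (for $\overline v$ and $\underline v$) and show the overdetermined system obtained by adjoining $\overline u+\underline u=\lambda-m$ has no solution with $\overline u>\underline u$ in the admissible box; equivalently, show that the map $(\underline u,\overline u,\underline v,\overline v)\mapsto(\underline u_{+},\overline u_{+},\underline v_{+},\overline v_{+})$ has $(u^*,u^*,v^*,v^*)$ as its only fixed point in that box. This is where the hypothesis $m\lambda-b<b/c$ is genuinely used (it is the condition making $(u^*,v^*)$ the unique positive equilibrium and, in the ODE literature, globally asymptotically stable), but you still owe the reader that computation.
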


\begin{theorem}\label{th8}
{\upshape(i)} If $h_{\infty}=\infty$ and $g_{\infty}<\infty$, then
$$
\lim\limits_{t\rightarrow\infty} u(t,x)=\lambda ~~uniformly~ in ~any ~compact~ subset~ of~ [0,\infty).
$$

{\upshape(ii)} If $h_{\infty}<\infty$ and $g_{\infty}=\infty$, then
$$
\lim\limits_{t\rightarrow\infty} v(t,x)=1~~uniformly~ in~ any~ compact~ subset~ of~ [0,\infty).
$$

\end{theorem}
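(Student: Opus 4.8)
The plan is to handle both parts by the same squeezing scheme: for the ``surviving'' species, produce one comparison forcing its $\limsup$ below the logistic carrying capacity and another forcing its $\liminf$ above it. The common starting point is Theorem \ref{th6}: if $g_\infty<\infty$ then $v(t,x)\to0$ \emph{uniformly} on $[0,\infty)$ (since $v\equiv0$ for $x\ge g(t)$ and $g(t)<g_\infty<\infty$), and symmetrically $h_\infty<\infty$ gives $u(t,x)\to0$ uniformly.

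For (i), the upper bound is immediate: from $\tfrac{bv}{u+mv}\ge0$ we have $u_t-u_{xx}\le u(\lambda-u)$ on $0<x<h(t)$, so $(u,h)$ is a lower solution of the logistic free boundary problem (\ref{27}) with $d=1$, $\theta=\lambda$, $\beta=\mu$ and the same initial data; hence $u\le\omega$ and $h\le s$, and since $h_\infty=\infty$ forces $s_\infty=\infty$, Proposition \ref{p2}(iii) gives $\omega(t,x)\to\lambda$, so $\limsup_{t\to\infty}u(t,x)\le\lambda$ locally uniformly. The lower bound is the substantial point, and I would split it in two. (a) In the predator-free zone $\{x\ge g_\infty\}$ one has $v\equiv0$, so there $u$ solves \emph{exactly} $u_t-u_{xx}=u(\lambda-u)$; fixing $L>\pi/\sqrt\lambda$ and a time after which $h(t)>g_\infty+L$ (so $u(t,\cdot)>0$ on $[g_\infty,g_\infty+L]$), a straightforward comparison from below on $(g_\infty,g_\infty+L)$ with the logistic equation under zero Dirichlet data yields $\liminf_{t\to\infty}u(t,x)\ge\Psi_L(x)>0$ for $x\in(g_\infty,g_\infty+L)$, where $\Psi_L$ is the unique positive steady state; thus $u$ is eventually bounded below by a positive function on a right neighbourhood of $g_\infty$. (b) Then I would run an $\omega$-limit argument: using the uniform-in-$t$ H\"older bounds from Theorem \ref{th3}, the boundedness of $u,v$, the fact that $h(t)\to\infty$, and that $(u,v)\mapsto\tfrac{uv}{u+mv}$ is Lipschitz on $[0,M_1]\times[0,M_2]$ (so interior Schauder estimates apply with no degeneracy near $u=0$), every $U=\lim_n u(t_n+\cdot,\cdot)$ along a subsequence is an entire solution of $U_t-U_{xx}=U(\lambda-U)$ on $\mathbb R\times[0,\infty)$ with $U_x(\cdot,0)=0$, $0\le U\le\lambda$ (by the two bounds above), and $U>0$ on a portion of $\{x>g_\infty\}$ (by (a)); a standard strong-maximum-principle argument applied to $\inf U$ — if it lay in $(0,\lambda)$ it would be attained in a further limit, contradicting the equation, and if it were $0$ the further limit would be $\equiv0$, contradicting $U>0$ there — forces $U\equiv\lambda$, so $u(t,x)\to\lambda$ uniformly on compacts.

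For (ii) the scheme is shorter, because the coupling term only \emph{helps} the predator. Here $u(t,x)\to0$ uniformly; from $\tfrac{cuv}{u+mv}\le\tfrac{cu}{m}$, for any $\epsilon>0$ and $t\ge T_\epsilon$ we have $v_t-dv_{xx}\le v(1-v)+\tfrac{c\epsilon}{m}$, so a large spatially constant solution of $\bar v'=\bar v(1-\bar v)+\tfrac{c\epsilon}{m}$ is a supersolution and $\bar v(t)\to\tfrac12\big(1+\sqrt{1+4c\epsilon/m}\big)$; letting $\epsilon\to0$ gives $\limsup_{t\to\infty}v(t,x)\le1$ uniformly. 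For the lower bound, $\tfrac{cuv}{u+mv}\ge0$ gives $v_t-dv_{xx}\ge v(1-v)$ on $0<x<g(t)$, so $(v,g)$ is an upper solution of the logistic free boundary problem (\ref{27}) with diffusion $d$, growth $\theta=1$ and $\beta=\rho$; since $g_\infty=\infty$, choose $T$ with $g(T)\ge\tfrac{\pi}{2}\sqrt d$ and restart the comparison at time $T$ from $(v(T,\cdot),g(T))$: by Proposition \ref{p2}(i) the resulting problem has infinite limiting interval, hence by Proposition \ref{p2}(iii) its solution tends to $1$ locally uniformly, and $v\ge\tilde\omega$ yields $\liminf_{t\to\infty}v(t,x)\ge1$ uniformly on compacts. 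Combining the two estimates, $v(t,x)\to1$ uniformly on compact subsets of $[0,\infty)$.

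I expect the main obstacle to be the lower bound in (i), i.e.\ excluding that $u$ partially vanishes. The crude lower bound one can extract from $v\le\epsilon$, namely $u_t-u_{xx}\ge u(\lambda-u)-b\epsilon$ (equivalently $\ge u(\lambda-u-\tfrac{b\epsilon}{u+m\epsilon})$), has a reaction term with an Allee-type sign change near $u=0$ once $\lambda\le b/m$, so the clean spreading dichotomy of Proposition \ref{p2} is not directly available there; the two-step argument above is designed precisely to bypass this, by first using the subregion where $v$ is identically zero and then upgrading the bound through the Liouville-type compactness argument. Part (ii) has no analogous difficulty, which is why its proof is essentially routine once the uniform decay $u\to0$ is in hand.
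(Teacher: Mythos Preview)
Your approach for (ii) matches the paper's intended symmetric argument. For (i), however, the paper takes a much shorter route that you essentially anticipated and then set aside: it relies on the standing hypothesis $m\lambda>b$ (used throughout Sections~4--5 and implicit here) to obtain immediately from (\ref{28}) that $\liminf_{t\to\infty}u(t,x)\ge\lambda-b/m>0$ locally uniformly. With this floor in hand the paper bootstraps: since $v\to0$ uniformly on $[0,\infty)$ and $v\equiv0$ for $x\ge g(t)$, for large $t$ one has $\tfrac{bv}{u+mv}\le\tfrac{b\varepsilon}{\lambda-b/m+m\varepsilon}$ on $[0,g_\infty]$ (where $u$ already sits above $\lambda-b/m$) and $\tfrac{bv}{u+mv}=0$ on $(g_\infty,h(t))$; hence $u_t-u_{xx}\ge u\bigl(\lambda-\tfrac{b\varepsilon}{\lambda-b/m+m\varepsilon}-u\bigr)$ on all of $[0,h(t)]$, and a single logistic comparison plus $\varepsilon\to0$ gives $\liminf u\ge\lambda$. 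So the paper never meets the Allee-type sign change you flagged.

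Your two-step scheme (predator-free zone plus $\omega$-limit compactness) is more robust --- it does not need $m\lambda>b$ --- but correspondingly heavier. One caution: the Liouville step as sketched is not quite self-contained. Bounded entire solutions of $U_t-U_{xx}=U(\lambda-U)$ on the half-line with Neumann data are \emph{not} all constant (think of merging-front solutions after even extension), so ``$\inf U\in(0,\lambda)$ is attained in a further limit, contradiction'' needs the extra input from step~(a). A clean way to close it: since $\Psi_L\to\lambda$ locally as $L\to\infty$, every $\omega$-limit $U$ equals $\lambda$ on $\{x>g_\infty\}$; then on the fixed interval $[0,g_\infty]$ the limit problem is logistic with $U_x(\cdot,0)=0$ and $U(\cdot,g_\infty)=\lambda$, whose only steady state in $[0,\lambda]$ is the constant $\lambda$, and an asymptotically-autonomous argument finishes. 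With that tightening your proof is correct and strictly more general than the paper's.
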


\begin{proof}
We only prove case (i) as proof of (ii) can be verified in the same way. According to the conclusions of the logistic equation and comparison principle, we have
\begin{equation}\label{29}
\lambda-b/m\leq\liminf_{t\rightarrow\infty} u(t,x),~\limsup_{t\rightarrow\infty} u(t,x)\leq\lambda
\end{equation}
uniformly in any compact subset of $[0,\infty)$ when $h_\infty=\infty$.
Note that $\lim\limits_{t\rightarrow\infty}\max\limits_{0\leq x\leq g(t)}v(t,x)=0$ for $g_{\infty}<\infty$ by Theorem \ref{th6} and $v(t,x)=0$ for $x>g(t)$. Owing to the former inequality of (\ref{29}), for any given $0<\varepsilon\ll1$, there exists $T\gg1$ such that $v(t,x)<\varepsilon$ for all $t\geq T$ and $x\geq 0$. Then $u$ satisfies
$$
\begin{cases}
u_t-du_{xx}\geq(\lambda-\frac{\varepsilon b}{\lambda-b/m+\varepsilon m})u-u^2,~~&t\geq T,~0<x<h(t),\\
u_x(t,0)=0,~u(t,h(t))=0,                                              &t\geq T,
\end{cases}
$$
which implies that $\liminf\limits_{t\rightarrow\infty}u(t,x)\geq \lambda-\frac{\varepsilon b}{\lambda-b/m+\varepsilon m}$ uniformly in any compact subset of $[0,\infty)$.
By the arbitrariness of $\varepsilon$, we have
$$
\liminf_{t\rightarrow\infty} u(t,x)\geq \lambda~~uniformly~ in~ any~ compact~ subset~ of~ [0,\infty).
$$
Combining with the latter inequality of (\ref{29}), we can get case (i).
\end{proof}

\section{Conditions for spreading and vanishing}
In any case, the following inequalities are always true
\begin{equation}\label{28}
\begin{cases}
\max\{0,\lambda-b/m\}u-u^2\leq\lambda u-u^2-\frac{buv}{u+mv}\leq\lambda u-u^2,\\
v-v^2\leq v-v^2+\frac{cuv}{u+mv}\leq(1+c)v-v^2.
\end{cases}
\end{equation}
Denote
$$
\mu^*:=\beta(1,\lambda,h_0,u_0),~\mu_*:=\beta(1,\lambda-b/m,h_0,u_0),
$$
and
$$
\rho^*:=\beta(d,1+c,g_0,v_0),~\rho_*:=\beta(d,1,g_0,v_0).
$$
By use of Proposition \ref{p2}, we have the following theorem immediately.
\begin{theorem}\label{th1}
Assume that $m\lambda>b$.

{\upshape(i)} If $h_0<\frac{\pi}{2}\sqrt{\frac{1}{\lambda}}$ and $\mu\leq\mu_*$, then $h_{\infty}<\infty$;

{\upshape(ii)}If $h_0\geq\frac{\pi}{2}\sqrt{\frac{m}{m\lambda-b}}$ or $h_0<\frac{\pi}{2}\sqrt{\frac{m}{m\lambda-b}}$ and $\mu>\mu^*$, then $h_{\infty}=\infty$;

{\upshape(iii)}If $g_0<\frac{\pi}{2}\sqrt{\frac{d}{1+c}}$ and $\rho\leq\rho_*$, then $g_{\infty}<\infty$;

{\upshape(iv)}If $g_0\geq\frac{\pi}{2}\sqrt{d}$ or $g_0<\frac{\pi}{2}\sqrt{d}$ and $\rho>\rho^*$, then $g_{\infty}=\infty$.
\end{theorem}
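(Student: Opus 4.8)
\noindent The plan is to deduce all four parts from Proposition~\ref{p2} by trapping the scalar equation for $u$ (and, identically, the one for $v$) between two diffusive logistic free boundary problems of type (\ref{27}) and then invoking the comparison principle for one-phase Stefan problems. By the first line of (\ref{28}) --- which is where the standing hypothesis $m\lambda>b$ is used, so that $\max\{0,\lambda-b/m\}=\lambda-b/m>0$ --- the prey density obeys $u(\lambda-b/m-u)\le u_t-u_{xx}\le u(\lambda-u)$ on $\{0<x<h(t)\}$, carrying the data from (\ref{Q}): $u_x(t,0)=0$, $u(t,h(t))=0$, $h'(t)=-\mu u_x(t,h(t))$, $u(0,\cdot)=u_0$, $h(0)=h_0$. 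Since the Stefan condition is an equality, $(u,h)$ is simultaneously a lower solution of (\ref{27}) with data $(d,\theta,\beta,s_0,\omega_0)=(1,\lambda,\mu,h_0,u_0)$ and an upper solution of (\ref{27}) with data $(1,\lambda-b/m,\mu,h_0,u_0)$, the regularity from Theorem~\ref{th3} making the boundary fluxes meaningful. Writing $(\overline{\omega},\overline{s})$ and $(\underline{\omega},\underline{s})$ for the global solutions of these two logistic problems (Proposition~\ref{p2}), the comparison principle (as in \cite{wang12}) gives $\underline{s}(t)\le h(t)\le\overline{s}(t)$ for all $t\ge0$, hence $\underline{s}_\infty\le h_\infty\le\overline{s}_\infty$. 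Carrying out the same step for $v$ on the second line of (\ref{28}), using $0\le\frac{cu}{u+mv}\le c$, traps $g$ between the free boundary of (\ref{27}) with data $(d,1,\rho,g_0,v_0)$ from below and that with data $(d,1+c,\rho,g_0,v_0)$ from above.

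Granting this, each conclusion is read off from Proposition~\ref{p2} for the relevant bounding problem. For (i): the upper problem for $u$ has habitat threshold $\frac{\pi}{2}\sqrt{1/\lambda}$ and critical moving parameter $\beta(1,\lambda,h_0,u_0)=\mu^*$, so Proposition~\ref{p2}(ii) gives $\overline{s}_\infty<\infty$, whence $h_\infty<\infty$. For (ii): the lower problem for $u$ has habitat threshold $\frac{\pi}{2}\sqrt{\frac{1}{\lambda-b/m}}=\frac{\pi}{2}\sqrt{\frac{m}{m\lambda-b}}$ and critical moving parameter $\beta(1,\lambda-b/m,h_0,u_0)=\mu_*$, so Proposition~\ref{p2}(i)--(ii) gives $\underline{s}_\infty=\infty$ under either of the stated conditions, hence $h_\infty\ge\underline{s}_\infty=\infty$. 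Parts (iii) and (iv) follow in exactly the same way from the two bounding problems for $g$, whose habitat thresholds are $\frac{\pi}{2}\sqrt{d/(1+c)}$ and $\frac{\pi}{2}\sqrt{d}$ and whose critical moving parameters are $\beta(d,1+c,g_0,v_0)=\rho^*$ and $\beta(d,1,g_0,v_0)=\rho_*$, respectively.

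I do not anticipate a real obstacle: once (\ref{28}) is in hand the theorem is a corollary of Proposition~\ref{p2}, which is why it can be asserted ``immediately''. The only two points deserving care are (a) verifying in detail that $(u,h)$ and $(v,g)$ satisfy all the defining inequalities of a lower/upper solution of (\ref{27}) so that the free boundary comparison lemma genuinely applies --- routine given Theorem~\ref{th3} --- and (b) the borderline case in which a moving parameter equals the critical value of its logistic comparison problem, where for the non-strict inequalities in (i) and (iii) one appeals to the known fact that vanishing still occurs at the critical moving parameter for (\ref{27}) (otherwise (i) and (iii) hold with strict inequalities).
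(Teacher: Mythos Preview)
Your approach is exactly the paper's: the theorem is asserted to follow ``immediately'' from Proposition~\ref{p2} via the inequalities (\ref{28}) and the free-boundary comparison principle, with no further detail given. One caveat worth flagging: the critical moving parameters you arrive at---$\mu^*$ in (i), $\mu_*$ in (ii), $\rho^*$ in (iii), $\rho_*$ in (iv)---are what the comparison argument actually delivers, but they are swapped relative to how the theorem is \emph{stated}; this is almost certainly a typo in the paper's statement (since $\beta(d,\theta,s_0,\omega_0)$ is decreasing in $\theta$, one has $\mu^*\le\mu_*$ and $\rho^*\le\rho_*$, and only your labeling makes the four parts consistent with Proposition~\ref{p2}), not a flaw in your argument.
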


Next, we will study the relationship between $h(t)$ and $g(t)$ when $t\rightarrow\infty$.
In view of Theorem \ref{th3}, for a constant $s\geq M_4$, we have $0<g(t)\leq st+g_0$.
\begin{theorem}\label{th2}
Let $\lambda$, $\mu$ be fixed. Then there exists $0<\overline s<\sqrt{2\lambda}$ such that when
$$
0<s<\overline s,~h_0-g_0>\frac{2\pi}{\sqrt{2\lambda-s^2}}=:L_s,
$$
we have $h(t)\geq st+g_0+L_s$, which implies
$h(t)>g(t)$ for all $t\geq0$ and $h_{\infty}\rightarrow\infty$ as $t\rightarrow\infty$.
\end{theorem}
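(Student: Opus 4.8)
The plan is to prove the sharper inequality $h(t)\geq st+g_0+L_s$ for all $t\geq0$ by constructing an explicit lower solution on a moving window, and then to obtain $h_\infty=\infty$ and $h(t)>g(t)$ as corollaries. Put $\eta(t):=st+g_0$ and $\overline h(t):=st+g_0+L_s$. The window only makes sense for $s<\sqrt{2\lambda}$ (which is where $\overline s<\sqrt{2\lambda}$ will come from), and I take $s\geq M_4$ as in the remark preceding the statement, so that by Theorem \ref{th3} one has $g(t)\leq M_4t+g_0\leq\eta(t)$. Hence $v\equiv0$ on $[\eta(t),\overline h(t)]$, and there the genuine prey density satisfies exactly the logistic equation $u_t-u_{xx}=u(\lambda-u)$.

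In the moving variable $\zeta:=x-\eta(t)\in[0,L_s]$ I would take
$$\underline u(t,x):=\delta\,e^{-s\zeta/2}\sin(\pi\zeta/L_s),$$
with $\delta>0$ to be fixed small. The width $L_s=2\pi/\sqrt{2\lambda-s^2}$ is tuned precisely so that $\pi/L_s=\sqrt{\lambda/2-s^2/4}$; thus $\underline u$ is the first arch of an exponentially damped sine, a direct computation gives $\underline u_t-\underline u_{xx}=\tfrac{\lambda}{2}\,\underline u$, and therefore $\underline u_t-\underline u_{xx}\leq\underline u(\lambda-\underline u)$ as soon as $\underline u\leq\lambda/2$, which holds once $\delta\leq\lambda/2$ since $0\leq\underline u\leq\delta$. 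Moreover $\underline u$ vanishes on both lateral sides, so $\underline u(t,\eta(t))=0\leq u(t,\eta(t))$; at $t=0$, imposing in addition $\delta\leq\min_{[g_0,g_0+L_s]}u_0$ — a positive number, because $g_0+L_s<h_0$ by the hypothesis $h_0-g_0>L_s$ — yields $\underline u(0,\cdot)\leq u_0$ on $[g_0,g_0+L_s]$, while $\overline h(0)=g_0+L_s<h_0=h(0)$. Finally $-\mu\,\underline u_x(t,\overline h(t))=\mu\delta\,\tfrac{\pi}{L_s}e^{-sL_s/2}$, so the Stefan inequality for the lower solution, $\overline h'(t)=s\leq-\mu\,\underline u_x(t,\overline h(t))$, amounts to
$$\frac{sL_s}{\pi}\,e^{sL_s/2}\leq\mu\delta .$$

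Because $L_s\to 2\pi/\sqrt{2\lambda}$ and $\delta$ can be kept bounded below by a positive constant depending only on $\lambda$ and $u_0$ as $s\to0^+$, the left side tends to $0$, so I may fix $\overline s\in(0,\sqrt{2\lambda})$ for which the last inequality holds for all admissible $s<\overline s$. Then $h(t)\geq\overline h(t)$ for every $t\geq0$: otherwise, since $h(0)=h_0>\overline h(0)$ there would be a first time $t^*>0$ with $h(t^*)=\overline h(t^*)$ and $h>\overline h$ on $[0,t^*)$; on $[0,t^*]$ the window lies inside $[\eta(t),h(t)]$, the maximum principle on this moving domain gives $\underline u\leq u$ there, and since $u-\underline u\geq0$ vanishes at $(t^*,\overline h(t^*))=(t^*,h(t^*))$ we get $-u_x(t^*,h(t^*))\geq-\underline u_x(t^*,\overline h(t^*))$, hence $h'(t^*)=-\mu u_x(t^*,h(t^*))\geq\mu\delta\tfrac{\pi}{L_s}e^{-sL_s/2}>s=\overline h'(t^*)$, contradicting $h'(t^*)\leq\overline h'(t^*)$. (One may equivalently invoke the comparison principle for free boundary problems.) From $h(t)\geq st+g_0+L_s\to\infty$ we get $h_\infty=\infty$, and from $g(t)\leq M_4t+g_0\leq st+g_0<st+g_0+L_s\leq h(t)$ we get $h(t)>g(t)$ for all $t\geq0$.

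I expect the main obstacle to be the joint quantitative choice of $\overline s$ and $\delta$: the number $\delta$ must simultaneously obey $\delta\leq\lambda/2$ and $\delta\leq\min_{[g_0,g_0+L_s]}u_0$ while still making $\tfrac{sL_s}{\pi}e^{sL_s/2}\leq\mu\delta$ hold uniformly over all admissible $s<\overline s$, and this has to be coupled with the correct formulation of the comparison principle on a moving domain one of whose lateral sides $x=\eta(t)$ is prescribed rather than free (harmless, since $\underline u=0\leq u$ there, but worth stating carefully). The algebra verifying that $\underline u$ is a genuine subsolution — that is, recognizing that the prescribed width is exactly the one for which the effective reaction rate felt by $\underline u$ is $\lambda/2$, leaving slack to absorb $-\delta\underline u^2$ — is the other point needing care, though it is routine once set up.
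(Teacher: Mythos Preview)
Your proof is correct and follows essentially the same approach as the paper's. Both arguments build the identical damped-sine subsolution $\delta e^{-s\zeta/2}\sin(\pi\zeta/L_s)$ on the moving strip $[st+g_0,\,st+g_0+L_s]$, use $s\geq M_4$ (from the remark preceding the theorem) to guarantee $v\equiv0$ there so that $u$ obeys the pure logistic equation, and close with a first-crossing argument comparing $h'$ to the Stefan flux $\mu\delta\frac{\pi}{L_s}e^{-sL_s/2}$. The only cosmetic difference is that the paper passes to the moving coordinate $y=x-st-g_0$ and writes the subsolution as a stationary $\psi(y)$, whereas you keep the original variable; also, the paper's constant $\delta_s=\min\{\inf\varphi(0,\cdot)/\phi,\ (\lambda/2)\inf 1/\phi\}$ is a slightly sharper normalization than your $\delta\leq\min\{\lambda/2,\ \min_{[g_0,g_0+L_s]}u_0\}$, but either choice works. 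Two small points worth tightening: your Stefan inequality should be taken \emph{strict}, $s<\mu\delta\frac{\pi}{L_s}e^{-sL_s/2}$, so that the first-crossing contradiction actually bites (you use $>$ in the final line after having written $\leq$ earlier); and your lower bound on $\delta$ depends on $g_0$ as well as on $\lambda$ and $u_0$, not only on the latter two.
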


\begin{proof}
Inspired by \cite[Theorem 5.3]{wang12}, for $h(t)>st+g_0$, we define
$$
y=x-st-g_0,~\varphi(t,y)=u(t,x),~\eta(t)=h(t)-st-g_0.
$$
Then $\varphi(t,y)>0$ for $t\geq0$ and $0\leq y<\eta(t)$. Evidently, $v\geq0$ implies that $x\geq g(t)$. Note that $v(t,x)=0$ for $x\geq g(t)$, so $\varphi$ satisfies
$$
\begin{cases}
\varphi_t-\varphi_{xx}-s\varphi_y=\varphi(\lambda-\varphi),~~~~~~&t>0,~0<y<\eta(t),\\
\varphi(t,0)=u(t,st+g_0),~\varphi(t,\eta(t))=0,                  &t\geq0,\\
\varphi(0,y)=u_0(y+g_0),                                         &0\leq y\leq h_0-g_0.
\end{cases}
$$
Let $\sigma$ be the principle eigenvalue of
\begin{equation}\label{1}
\begin{cases}
-\phi^{\prime\prime}-s\phi^{\prime}-\lambda\phi=\sigma\phi,~~&0<x<L,\\
\phi(0)=\phi(L)=0.
\end{cases}
\end{equation}
Then the relation between $\sigma$ and $L$ satisfies
$$
\frac{\pi}{L}=\frac{\sqrt{4(\lambda+\sigma)-s^2}}{2}.
$$
For $0<s<\sqrt{2\lambda}$, choose $\sigma=-\lambda/2$ and define
$$
L_s=\frac{2\pi}{\sqrt{2\lambda-s^2}},~\phi(y)=e^{-\frac{s}{2}y} \sin \frac{\pi}{L_{s}}y,
$$
and then $(L_s,\phi)$ satisfies (\ref{1}) with $\sigma=-\frac{\lambda}{2}$ and $L=L_s$. Assume $h_0-g_0>L_s$ and define
$$
\delta_s=\min\left\{\inf\limits_{(0,L_s)}\frac{\varphi(0,y)}{\phi(y)},\frac{\lambda}{2}\inf\limits_{(0,L_s)}\frac{1}{\phi(y)}\right\},~\psi(y)=\delta_s\phi(y).
$$
Then $0<\delta_s<\infty$ and $\psi(y)\leq\varphi(0,y)$ in $[0,L_s]$. Moreover, $\psi(y)$ satisfies
$$
\begin{cases}
-\psi^{\prime\prime}-s\psi^{\prime}\leq\psi(\lambda-\psi),~~~~&0<x<L_s,\\
\psi(0)=\psi(L_s)=0.
\end{cases}
$$
Take a maximal $\overline s\in(0,\sqrt{2\lambda})$ so that
\begin{equation}\label{2}
s<\mu\delta_s\frac{\pi}{L_s}e^{-\frac{s}{2}L_s},~~\forall s\in(0,\overline s).
\end{equation}
For any given $s\in(0,\overline s)$, we claim that $\eta(t)>L_s$ for $t\geq0$, which leads to
$$
h(t)\geq st+g_0+L_s\rightarrow\infty, ~~t\rightarrow\infty.
$$
Note that $\eta(0)=h_0-g_0>L_s$, if our claim is invalid, then we can find a $t_0>0$ such that $\eta(t)>L_s$ for all $0<t<t_0$ and $\eta(t_0)=L_s$. So $\eta^{\prime}(t_0)\leq0$ and $h^{\prime}(t_0)\leq s$. Besides, by the comparison principle, we have $\varphi(t,y)\geq\psi(y)$ in $[0,t_0]\times[0,L_s]$. So we have $\varphi(t_0,y)\geq\psi(y)$ in $[0,L_s]$. Thanks to $\varphi(t_0,L_s)=\varphi(t_0,\eta(t_0))=0=\psi(L_s)$, we get
$$
\varphi_y(t_0,L_s)\leq\psi^{\prime}(L_s)=-\delta_s\frac{\pi}{L_s}e^{-\frac{s}{2}L_s}.
$$
A direct calculation deduces that
$$
\begin{aligned}
 s&\geq h^{\prime}(t_0)=-\mu u_x(t_0,h(t_0))=-\mu\varphi_y(t_0,\eta(t_0))=-\mu\varphi_y(t_0,L_s)\\
  &\geq\mu\delta_s\frac{\pi}{L_s}e^{-\frac{s}{2}L_s},
\end{aligned}
$$
which contradicts (\ref{2}). The proof is complete.
\end{proof}

Combining Theorem \ref{th1}(iv) and Theorem \ref{th2}, we get the following remark immediately.
\begin{remark}\label{rm1}
Assume that $\lambda$, $\mu$, $s$, $h_0$, $g_0$ satisfy the conditions of Theorem \ref{th2}. If $g_0\geq\frac{\pi}{2}\sqrt{d}$, then $g_{\infty}=\infty$, which implies $h_{\infty}=\infty$ for all $\mu>0$.
\end{remark}

According to Proposition \ref{p1}, we have
$$
\lim\limits_{\mu\rightarrow0}c(\mu,1,\lambda)=0,~\lim\limits_{\rho\rightarrow\infty}c(\rho,d,1)=2\sqrt{d}.
$$
Note that $c(\beta,d,\theta)$ is strictly increasing in $\beta$, so there exist positive constants $\overline\mu$ and $\overline\rho$ such that $c(\mu,1,\lambda)<c(\rho,d,1)$ for all $0<\mu\leq\overline\mu$ and $\rho\geq\overline \rho$. Define
$$
\mathcal{F}:=\{(\mu,\rho):\mu,\rho>0,~c(\mu,1,\lambda)<c(\rho,d,1)\},
$$
then $(0,\overline\mu]\times[\overline\rho,\infty)\subset\mathcal{F}$.

\begin{theorem}\label{th4}
Assume that $(\mu,\rho)\in\mathcal{F}$. If $\lambda^2+m\lambda<b$ and $g_{\infty}=\infty$, then $h_{\infty}<\infty$.
\end{theorem}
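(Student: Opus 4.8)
The plan is to argue by contradiction: suppose $h_\infty=\infty$. Since $(\mu,\rho)\in\mathcal F$, the assumption $g_\infty=\infty$ will force the predator front to eventually outrun the prey front, so that the prey habitat becomes submerged in a region where $v$ is bounded away from $0$; the hypothesis $\lambda^2+m\lambda<b$ then makes the reaction term of the $u$-equation strictly negative throughout $[0,h(t)]$, and a standard integral estimate on $\int_0^{h(t)}u\,dx$ forces $h(t)$ to stay bounded, contradicting $h_\infty=\infty$.

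First I would pin down the two spreading speeds. From the first line of (\ref{28}), $u$ is a lower solution of the free-boundary logistic problem (\ref{27}) with $(d,\theta,\beta)=(1,\lambda,\mu)$ and data $(u_0,h_0)$, whose solution $(\hat w,\hat s)$ then satisfies $h(t)\le\hat s(t)$; since $\hat s_\infty\ge h_\infty=\infty$, spreading occurs for $(\hat w,\hat s)$, so by Propositions \ref{p2}--\ref{p3} one gets $\hat s(t)/t\to c(\mu,1,\lambda)$ and hence $\limsup_{t\to\infty}h(t)/t\le c(\mu,1,\lambda)$. For the predator, the second line of (\ref{28}) shows $v$ is an upper solution of (\ref{27}) with $(d,\theta,\beta)=(d,1,\rho)$ (the extra term $\tfrac{cuv}{u+mv}$ is nonnegative and vanishes as $v\to0$, so this is routine to check). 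Because $g_\infty=\infty$, pick $T_0$ with $g(T_0)>\tfrac{\pi}{2}\sqrt d$ and compare $v$ on $[T_0,\infty)$ with the solution $(\underline w,\underline s)$ of that logistic problem started at time $T_0$ from $(v(T_0,\cdot),g(T_0))$: Proposition \ref{p2}(i) gives $\underline s_\infty=\infty$, so $g(t)\ge\underline s(t)$, $v\ge\underline w$, and Proposition \ref{p3} yields $\underline s(t)/t\to c(\rho,d,1)$ together with $\underline w(t,x)\to q(\underline s(t)+H-x)$ uniformly, $q(\infty)=1$. Fixing any $c_1\in(c(\mu,1,\lambda),c(\rho,d,1))$, since $\underline s(t)-c_1t\to\infty$ we obtain $\inf_{[0,c_1t]}v(t,\cdot)\to1$ as $t\to\infty$.

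With these in hand, there is $T_1$ so that $h(t)<c_1t<g(t)$ for $t\ge T_1$; thus on $[0,h(t)]$ both $u$ and $v$ are defined, and for $t$ large $v(t,x)\ge1-\varepsilon$ there, while comparison of $u$ with the spatially homogeneous solution of $U'=U(\lambda-U)$, $U(0)=M_1$, gives $u(t,x)\le\lambda+\varepsilon$ for $t$ large. Hence on $[0,h(t)]$, using $-u\le0$ and then $\tfrac{bv}{u+mv}=\tfrac{b}{u/v+m}$,
$$\lambda-u-\frac{bv}{u+mv}\le\lambda-\frac{bv}{u+mv}\le\lambda-\frac{b(1-\varepsilon)}{\lambda+\varepsilon+m(1-\varepsilon)}=:-\kappa<0$$
for $\varepsilon$ small, precisely because $\lambda^2+m\lambda<b$ means $\lambda<\tfrac{b}{\lambda+m}$. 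Therefore $u_t-u_{xx}\le-\kappa u$ on $[0,h(t)]$ for $t\ge T^{*}$. Setting $E(t)=\int_0^{h(t)}u(t,x)\,dx$ and using $u(t,h(t))=0$, $u_x(t,0)=0$, $u_x(t,h(t))=-h'(t)/\mu$, integration of the $u$-equation over $[0,h(t)]$ gives $E'(t)\le-h'(t)/\mu-\kappa E(t)\le-h'(t)/\mu$; integrating over $[T^{*},t]$ yields $h(t)\le h(T^{*})+\mu E(T^{*})$ for all $t\ge T^{*}$, so $h_\infty<\infty$, the desired contradiction. Consequently $h_\infty<\infty$.

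The step I expect to be the main obstacle is extracting a \emph{uniform} positive lower bound for $v$ on the whole moving strip $[0,c_1t]$ (not merely on compact sets), since this is what makes the reaction term of the $u$-equation uniformly negative along the prey habitat; this hinges on combining the free-boundary comparison principle with the sharp asymptotic profile of Proposition \ref{p3}. The other ingredients — the lower-solution bound $\limsup h(t)/t\le c(\mu,1,\lambda)$, the elementary bound $u\le\lambda+\varepsilon$, and the integral estimate forcing $h_\infty<\infty$ — are standard and, once $\lambda^2+m\lambda<b$ is used to pin down the sign, essentially mechanical.
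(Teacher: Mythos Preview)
Your argument is correct and follows essentially the same route as the paper: compare $v$ below by the logistic free-boundary solution with parameter $\theta=1$ (started once $g(T_0)>\tfrac{\pi}{2}\sqrt d$), use Proposition~\ref{p3} to see the predator profile fills out to $1$ faster than the prey front advances because $(\mu,\rho)\in\mathcal F$, deduce $v\ge 1-\varepsilon$ on $[0,h(t)]$ for large $t$, and then combine with $u\le\lambda+\varepsilon$ and $\lambda^2+m\lambda<b$ to make the reaction term negative, feeding the integral identity for $\int_0^{h(t)}u\,dx$ to bound $h(t)$.

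Two minor remarks. First, the contradiction hypothesis $h_\infty=\infty$ is not needed: the paper simply compares $h$ with the logistic upper solution $\overline h$ (parameters $(1,\lambda,\mu)$) and observes $\underline g(t)-h(t)\ge \underline g(t)-\overline h(t)\to\infty$ regardless of whether $\overline h_\infty$ is finite or infinite; your route via a fixed intermediate speed $c_1\in(c(\mu,1,\lambda),c(\rho,d,1))$ is an equivalent way of packaging the same comparison. Second, your care in writing $u\le\lambda+\varepsilon$ (rather than $u\le\lambda$) is in fact slightly more accurate than the paper, which tacitly assumes $t_2$ large enough for the ODE comparison to have relaxed.
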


\begin{proof}
As a result of $\lambda^2+m\lambda<b$, there exists $0<\varepsilon\ll 1$ such that \begin{equation}\label{6}
\lambda^2+(m\lambda-b)(1-\varepsilon)\leq0.
\end{equation}
Owing to $g_{\infty}=\infty$, there exists $t_1>0$ such that $g(t_1)>\frac{\pi}{2}\sqrt{d}$. Let $(\underline v,\underline g)$ be the unique solution of
$$
\begin{cases}
\underline v_t-d\underline v_{xx}=\underline v-\underline v^2,~~~~&t>t_1,~0<x<\underline g(t),\\
\underline v_x(t,0)=0,~\underline v(t,\underline g(t))=0,                &t>t_1,\\
\underline g^{\prime}=-\rho\underline v_x(t,\underline g(t)),         &t\geq t_1,\\
\underline v(t_1,x)=v_0(t_1,x),~\underline g(t_1)=g(t_1),                    &0\leq x\leq g(t_1).
\end{cases}
$$
By the comparison principle, it yields that $v(t,x)\geq\underline v(t,x)$, $g(t)\geq \underline g(t)$ for $t\geq t_1$, $0\leq x\leq\underline g(t)$. It follows from Proposition \ref{p2} (i) that
$\underline g(\infty)=\infty$. In addition, by Proposition \ref{p3}, there exists $\mathcal{S}_1\in\mathbb{R}$ such that
\begin{equation}\label{5}
\underline g(t)-\tilde{c} t\rightarrow \mathcal{S}_1,~||\underline v(t,x)-q_1(\tilde{c}t+\mathcal{S}_1-x)||_{L^{\infty}([0,\underline g(t)])}\rightarrow 0,~as~t\rightarrow\infty,
\end{equation}
where $(q_1(y),\tilde{c})$ is the unique solution of (\ref{3}) with $(\beta,d,\theta)=(\rho,d,1)$, i.e., $\tilde{c}=c(\rho,d,1)$.

In view of $(\mu,\rho)\in\mathcal{F}$, we have $\tilde{c}>c(\mu,1,\lambda)$ which implies that $\underline g(t)-h(t)\geq \underline g(t)-\overline h(t)\rightarrow\infty$. So, we get $\min\limits_{0\leq x\leq h(t)}q_1(\tilde{c}t+\mathcal{S}_1-x)\rightarrow1$ as $t\rightarrow\infty$. Thus, by (\ref{5}), we have $\lim\limits_{t\rightarrow\infty}\min\limits_{0\leq x\leq h(t)}\underline v(t,x)=1$. There exists $t_2>t_1$ such that
\begin{equation}\label{30}
\underline v(t,x)>1-\varepsilon,~~t\geq t_2,~0\leq x\leq h(t).
\end{equation}
Besides, it follows from Theorem \ref{th3} that $u(t,x)\leq \lambda$ for $t>t_2$, $0\leq x\leq h(t)$.
By the comparison principle and (\ref{6}), we then have
\begin{equation}\label{7}
\begin{aligned}
  \lambda-u-\frac{bv}{u+mv}
&<\lambda-\frac{b(1-\varepsilon)}{\lambda+m(1-\varepsilon)}-u\\
&=\frac{\lambda^2+(m\lambda-b)(1-\varepsilon)}{\lambda+m(1-\varepsilon)}-u\\
&<0
\end{aligned}
\end{equation}
for all $t\geq t_2$ and $0\leq x\leq h(t)$. Direct calculation gives
$$
\begin{aligned}
\frac{d}{dt}\int_0^{h(t)}u(t,x)dx&=\int_0^{h(t)}u_t(t,x)dx+h^{\prime}(t)u(t,h(t))\\
                                 &=\int_0^{h(t)}u_{xx}(t,x)dx+\int_0^{h(t)}(\lambda u-u^2-\frac{buv}{u+mv})dx\\
                                 &=-\frac{1}{\mu}h^{\prime}(t)+\int_0^{h(t)}(\lambda u-u^2-\frac{buv}{u+mv} )dx.
\end{aligned}
$$
By integrating from $t_2$ to $t$ firstly and applying (\ref{7}) secondly, we have
$$
\begin{aligned}
\int_0^{h(t)}u(t,x)dx
&=\int_0^{h(t_2)}u(t_2,x)+\frac{1}{\mu}(h(t_2)-h(t))+\int_{t_2}^t\int_0^{h(\tau)}(\lambda u-u^2-\frac{buv}{u+mv})dxd\tau\\
&\leq \int_0^{h(t_2)}u(t_2,x)+\frac{1}{\mu}(h(t_2)-h(t)),
\end{aligned}
$$
which implies
$$
h(t)\leq h(t_2)+\mu\int_0^{h(t_2)}u(t_2,x)dx.
$$
Hence, $h_{\infty}<\infty$. This completes the proof.
\end{proof}

\section{Asymptotic spreading speeds of $u$, $v$ and asymptotic speeds of $g$, $h$}

This section is devoted to dealing with the asymptotic spreading speeds of $u$, $v$ and asymptotic speeds of $g$, $h$ when spreading occurs ($h_{\infty}=g_{\infty}=\infty$). In this section, we always assume that $m\lambda>b$.

According to (\ref{28}), Proposition \ref{p1} and the comparison principle, we have
\begin{equation}\label{12}
\limsup_{t\rightarrow\infty}\frac{h(t)}{t}\leq c(\mu,1,\lambda)=:\overline c_\mu,~\liminf_{t\rightarrow\infty}\frac{h(t)}{t}\geq c(\mu,1,\lambda-b/m)=:\underline c_{\mu},
\end{equation}
\begin{equation}\label{13}
\limsup_{t\rightarrow\infty}\frac{g(t)}{t}\leq c(\rho,d,1+c)=:\overline c_\rho,~\liminf_{t\rightarrow\infty}\frac{g(t)}{t}\geq c(\rho,d,1)=:\underline c_\rho.
\end{equation}
Denote
$$
c_1=2\sqrt{\lambda-b/m},~c_2=2\sqrt{\lambda},~c_3=2\sqrt{d},~c_4=2\sqrt{d(1+c)}.
$$

\begin{theorem}\label{th5}
For any given $0<\varepsilon\ll1$, there exist $\mu_\varepsilon$, $\rho_\varepsilon$, $T\gg1$ such that when $\rho\geq\rho_\varepsilon$ and $\mu\geq\mu_\varepsilon$,
\begin{equation}\label{8}
u(t,x)=0~for~t\geq T,~x\geq(c_2+\varepsilon)t,
\end{equation}
\begin{equation}\label{9}
v(t,x)=0~for~t\geq T,~x\geq(c_4+\varepsilon)t,
\end{equation}
\begin{equation}\label{10}
\liminf_{t\rightarrow\infty}\min_{0\leq x\leq(c_1-\varepsilon)t}u(t,x)\geq\lambda-b/m,
\end{equation}
\begin{equation}\label{11}
\liminf_{t\rightarrow\infty}\min_{0\leq x\leq (c_3-\varepsilon)t}v(t,x)\geq1.
\end{equation}
\end{theorem}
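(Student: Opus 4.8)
The plan is to sandwich $(u,v)$ between solutions of the free--boundary logistic problem \eqref{27} and then read off the speeds from Propositions \ref{p1}--\ref{p3}. By \eqref{28}, $u$ is a subsolution of \eqref{27} with parameters $(d,\theta,\beta)=(1,\lambda,\mu)$ and a supersolution with $(1,\lambda-b/m,\mu)$, while $v$ is a subsolution of \eqref{27} with $(d,1+c,\rho)$ and a supersolution with $(d,1,\rho)$. Taking the initial data of the comparison problems to be $(u_0,h_0)$, respectively $(v_0,g_0)$, the comparison principle for free--boundary problems yields $(\underline u,\underline h)\le(u,h)\le(\overline u,\overline h)$ and $(\underline v,\underline g)\le(v,g)\le(\overline v,\overline g)$ in the obvious componentwise sense, where the overlined (resp. underlined) pairs solve the corresponding upper (resp. lower) logistic problems.

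For the upper estimates \eqref{8}--\eqref{9} I would simply quote \eqref{12}--\eqref{13}: since $\overline c_\mu=c(\mu,1,\lambda)<2\sqrt{\lambda}=c_2$ and $\overline c_\rho=c(\rho,d,1+c)<2\sqrt{d(1+c)}=c_4$ by the constraint built into \eqref{3}, we get $\limsup_{t\to\infty}h(t)/t<c_2$ and $\limsup_{t\to\infty}g(t)/t<c_4$, hence there is $T_1$ with $h(t)\le(c_2+\varepsilon)t$ and $g(t)\le(c_4+\varepsilon)t$ for $t\ge T_1$; \eqref{8}--\eqref{9} then follow because $u(t,x)=0$ for $x\ge h(t)$ and $v(t,x)=0$ for $x\ge g(t)$. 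Note that these two estimates need no largeness of $\mu$ or $\rho$.

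The lower estimates \eqref{10}--\eqref{11} are where $\mu_\varepsilon$, $\rho_\varepsilon$ enter. First, by Proposition \ref{p2} we may take $\mu_\varepsilon\ge\mu_*$ and $\rho_\varepsilon\ge\rho_*$ (recall $\mu_*,\rho_*$ from Section 4) so that the lower logistic problems spread, i.e.\ $\underline h_\infty=\underline g_\infty=\infty$. Next, by \eqref{4} and the monotonicity in Proposition \ref{p1}, $\underline c_\mu:=c(\mu,1,\lambda-b/m)\uparrow c_1=2\sqrt{\lambda-b/m}$ as $\mu\to\infty$ and $\underline c_\rho:=c(\rho,d,1)\uparrow c_3=2\sqrt d$ as $\rho\to\infty$, so after enlarging $\mu_\varepsilon$, $\rho_\varepsilon$ we may also assume $\underline c_\mu>c_1-\varepsilon$ and $\underline c_\rho>c_3-\varepsilon$. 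Applying Proposition \ref{p3} to $(\underline u,\underline h)$ and $(\underline v,\underline g)$ gives constants $\mathcal S,\mathcal S'$ with $\|\underline u(t,\cdot)-q_1(\underline c_\mu t+\mathcal S-\cdot)\|_{L^\infty([0,\underline h(t)])}\to0$ and $\|\underline v(t,\cdot)-q_2(\underline c_\rho t+\mathcal S'-\cdot)\|_{L^\infty([0,\underline g(t)])}\to0$, where $(q_1,\underline c_\mu)$, $(q_2,\underline c_\rho)$ solve \eqref{3} with the relevant parameters, so $q_1$, $q_2$ are increasing with $q_1(\infty)=\lambda-b/m$ and $q_2(\infty)=1$. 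Since $\underline h(t)-\underline c_\mu t\to\mathcal S$ and $\underline c_\mu>c_1-\varepsilon$, we have $[0,(c_1-\varepsilon)t]\subset[0,\underline h(t)]$ for large $t$, and by monotonicity of $q_1$,
$$
\min_{0\le x\le(c_1-\varepsilon)t}q_1(\underline c_\mu t+\mathcal S-x)=q_1\bigl((\underline c_\mu-c_1+\varepsilon)t+\mathcal S\bigr)\longrightarrow\lambda-b/m,
$$
because the argument tends to $+\infty$. Combining this with the $L^\infty$ convergence and $u\ge\underline u$ yields \eqref{10}; the identical computation with $\underline v$, $q_2$, $\underline c_\rho$, $\underline g$ yields \eqref{11}. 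Finally take $T$ to be the largest of the finitely many thresholds encountered.

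I expect the only real difficulty to be organizing the lower-bound step: one must (a) ensure the lower comparison problems genuinely spread --- this is exactly the role of $\mu\ge\mu_\varepsilon$, $\rho\ge\rho_\varepsilon$ via Proposition \ref{p2}(ii) --- and (b) upgrade the $L^\infty$ convergence to the profile $q_i$ into a bound that is uniform over the \emph{expanding} interval $[0,(c_i-\varepsilon)t]$, which forces $\underline c_\mu>c_1-\varepsilon$, $\underline c_\rho>c_3-\varepsilon$ and hence again the largeness of the moving parameters. Everything else is a routine application of the comparison principle and of the cited propositions.
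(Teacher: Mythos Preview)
Your proposal is correct and follows essentially the same route as the paper: compare $(u,h)$ and $(v,g)$ with the logistic free--boundary problems via \eqref{28}, then invoke Propositions~\ref{p1} and~\ref{p3} to control the fronts and the profiles. One small refinement you make that the paper does not: you observe that \eqref{8}--\eqref{9} require no largeness of $\mu,\rho$ since $\overline c_\mu<c_2$ and $\overline c_\rho<c_4$ hold for every parameter value by the constraint in \eqref{3}; the paper states these under the blanket hypothesis $\mu\ge\mu_\varepsilon$, $\rho\ge\rho_\varepsilon$ without noting that this is only needed for the lower bounds. You are also more explicit than the paper about why the lower comparison problems $(\underline u,\underline h)$, $(\underline v,\underline g)$ actually spread (the paper's reference to ``Proposition~\ref{rm1}'' here is a typo), which is exactly the point where the thresholds $\mu_\varepsilon$, $\rho_\varepsilon$ enter.
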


\begin{proof}
It follows from Proposition \ref{p1} that
$$
\lim\limits_{\mu\rightarrow\infty}\underline c_\mu=c_1,~\lim\limits_{\mu\rightarrow\infty}\overline c_\mu=c_2,~\lim\limits_{\rho\rightarrow\infty}\underline c_\rho=c_3,~\lim\limits_{\rho\rightarrow\infty}\overline c_\rho=c_4.
$$
Take notice of (\ref{12}) and (\ref{13}), so for any given $0<\varepsilon\ll1$, there exist $\mu_\varepsilon$, $\rho_\varepsilon\gg1$ such that
\begin{equation}\label{88}
c_1-\varepsilon/2<\underline c_\mu\leq\liminf_{t\rightarrow\infty}\frac{h(t)}{t},~\limsup_{t\rightarrow\infty}\frac{h(t)}{t}\leq\overline c_\mu<c_2+\varepsilon/4,
\end{equation}
\begin{equation}\label{89}
c_3-\varepsilon/2<\underline c_\rho\leq\liminf_{t\rightarrow\infty}\frac{g(t)}{t},~\limsup_{t\rightarrow\infty}\frac{g(t)}{t}\leq\overline c_\rho<c_4+\varepsilon/2.
\end{equation}
Thus, there exists a $\tau_1\gg1$ such that , for all $t\geq\tau_1$, $\mu\geq\mu_\varepsilon$ and $\rho\geq\rho_\varepsilon$,
\begin{equation}\label{17}
(c_1-\varepsilon)t<h(t)<(c_2+\varepsilon/2)t,~(c_3-\varepsilon)t<g(t)<(c_4+\varepsilon)t.
\end{equation}
It is obvious that  (\ref{8}) and (\ref{9}) hold.

The proof of (\ref{10}). Let $(\underline u,\underline h)$ be the unique solution of (\ref{27}) with $({\rho},d,a,b)=({\rho},1,{\lambda}-b/m,1)$.
Then $h(t)\geq\underline h(t)$, $u(t,x)\geq\underline u(t,x)$ for $t>0$ and $0<x\leq\underline h(t)$ by Proposition \ref{rm1}. Make use of \cite[Theorem 3.1]{wang5}, we get
$$
\lim_{t\rightarrow\infty}(\underline h(t)-c^*t)=H\in\mathbb{R},~\lim_{t\rightarrow\infty}||\underline u(t,x)-q^*(c^*t+H-x)||_{L^{\infty}([0,\underline h(t)])}=0,
$$
where $(q^*(y),c^*)$ is the unique solution of (\ref{3}) with $(\beta,d,\theta)=({\rho},1,{\lambda}-{b}/{m})$, i.e., $c^*=c({\rho},1,{\lambda}-b/m)>c_1-\frac{\varepsilon}{2}$. Obviously, $\min_{[0,(c_1-\frac{\varepsilon}{2})t]}(c^*t+H-x)\rightarrow\infty$ as $t\rightarrow\infty$.
Owing to $q^*(y)\nearrow{\lambda}-{b}/{m}$ as $y\nearrow\infty$, we have $\min_{[0,(c_1-\frac{\varepsilon}{2})t]}q^*(c^*t+H-x)\rightarrow{\lambda}-b/m$ as $t\rightarrow\infty$. So $\min_{[0,(c_1-\frac{\varepsilon}{2})t]}\underline u(t,x)\rightarrow{\lambda}-b/m$ as $t\rightarrow\infty$. Thus (\ref{10}) holds due to $u(t,x)\geq\underline u(t,x)$ for $t>0$ and $0<x\leq\underline h(t)$.

Similarity, we can prove that (\ref{11}) holds.
\end{proof}

In the following we give a further study on $u$, $h$ and $v$, $g$.
\begin{theorem}\label{th9}
If $d(1+c)<\lambda-b/m$, then for any given $\varepsilon>0$, there exist $\rho_{\varepsilon}$, $\mu_{\varepsilon}$, $T\gg1$ such that, when $\rho\geq\rho_{\varepsilon}$ and $\mu\geq\mu_{\varepsilon}$, we have
\begin{equation}\label{26}
\lim\limits_{\mu\rightarrow\infty}\lim\limits_{t\rightarrow\infty}\frac{h(t)}{t}\geq c_5
\end{equation}
and
\begin{equation}\label{25}
\lim\limits_{t\rightarrow\infty}\max\limits_{0\leq x\leq(c_5-\varepsilon)t}u(t,x)>0,
\end{equation}
where $c_5$ is given by (\ref{83}).
\end{theorem}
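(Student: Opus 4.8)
The plan is to exploit the hypothesis $d(1+c)<\lambda-b/m$, which reads $c_4<c_1$ in the notation introduced after (\ref{13}): it produces a predator--free strip just behind the advancing front of the prey, and in that strip $u$ obeys the scalar logistic law with the \emph{full} carrying capacity $\lambda$, not $\lambda-b/m$. First I would fix $0<\varepsilon\ll1$ with $c_4+2\varepsilon<c_1-\varepsilon$ and take $\mu_\varepsilon,\rho_\varepsilon,T$ as in Theorem \ref{th5}; then for $\mu\geq\mu_\varepsilon$, $\rho\geq\rho_\varepsilon$, $t\geq T$, (\ref{9}) and (\ref{17}) give $v(t,x)=0$ whenever $x\geq(c_4+\varepsilon)t$ and $h(t)>(c_1-\varepsilon)t$, while (\ref{10}) gives, after enlarging $T$, that $u(t,x)\geq\lambda-b/m-\delta$ on $[0,(c_1-\varepsilon)t]$ for any prescribed $\delta\in(0,\lambda-b/m)$. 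Putting $\sigma:=c_4+\varepsilon$, on the moving region $\{t\geq T,\ \sigma t\leq x\leq h(t)\}$ the predator is absent, so there $u_t-u_{xx}=u(\lambda-u)$, $u(t,\sigma t)\geq\lambda-b/m-\delta$, $u(t,h(t))=0$ and $h^{\prime}(t)=-\mu u_x(t,h(t))$.

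Next I would compare $u$ on this strip from below with the auxiliary free boundary problem
$$
\begin{cases}
\underline u_t-\underline u_{xx}=\underline u(\lambda-\underline u), & t>T,\ \sigma t<x<\underline h(t),\\
\underline u(t,\sigma t)=\lambda-b/m-\delta,\quad\underline u(t,\underline h(t))=0, & t\geq T,\\
\underline h^{\prime}(t)=-\mu\underline u_x(t,\underline h(t)), & t\geq T,\\
\underline h(T)=(c_1-\varepsilon)T,\quad\underline u(T,\cdot)=\underline u_0, &
\end{cases}
$$
where $\underline u_0$ is a suitable positive function on $[\sigma T,(c_1-\varepsilon)T]$, compatible with the boundary data and with $\underline u_0\leq u(T,\cdot)$. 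The free boundary comparison principle for problems with a moving Dirichlet boundary on the left (as in \cite{wang5,wang12}) then gives $h(t)\geq\underline h(t)$ and $u(t,x)\geq\underline u(t,x)$ for $t\geq T$, $\sigma t\leq x\leq\underline h(t)$. The core of the argument is the long--time analysis of $(\underline u,\underline h)$: because $\mu\geq\mu_\varepsilon$ is large and $\sigma<2\sqrt\lambda$, spreading occurs, $\underline h(t)\to\infty$, $\underline h(t)=c(\mu,1,\lambda)\,t+o(t)$, and $\|\underline u(t,\cdot)-q^*(c(\mu,1,\lambda)\,t+H-x)\|_{L^\infty([\sigma t,\underline h(t)])}\to0$ for some $H\in\mathbb R$, where $(q^*,c(\mu,1,\lambda))$ solves (\ref{3}) with $(\beta,d,\theta)=(\mu,1,\lambda)$. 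By Proposition \ref{p1}, $c(\mu,1,\lambda)\uparrow c_5$ as $\mu\to\infty$, where
\begin{equation}\label{83}
c_5:=\lim_{\mu\to\infty}c(\mu,1,\lambda)=2\sqrt{\lambda}.
\end{equation}
Hence $\liminf_{t\to\infty}h(t)/t\geq c(\mu,1,\lambda)$, which together with (\ref{12}) forces $\lim_{t\to\infty}h(t)/t=c(\mu,1,\lambda)$; letting $\mu\to\infty$ yields $\lim_{\mu\to\infty}\lim_{t\to\infty}h(t)/t=c_5$, which is (\ref{26}). For (\ref{25}), enlarging $\mu_\varepsilon$ so that $c(\mu,1,\lambda)>c_5-\varepsilon$ and $\sigma<c_5-\varepsilon$ whenever $\mu\geq\mu_\varepsilon$, the line $x=(c_5-\varepsilon)t$ eventually lies in $[\sigma t,\underline h(t)]$, and $q^*\big((c(\mu,1,\lambda)-c_5+\varepsilon)t+H\big)\to\lambda$ since its argument tends to $+\infty$; hence $\max_{0\leq x\leq(c_5-\varepsilon)t}u(t,x)\geq\underline u(t,(c_5-\varepsilon)t)\to\lambda>0$.

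The step I expect to be the real obstacle is the long--time analysis of the auxiliary problem with a genuinely moving left boundary: Proposition \ref{p3} is stated only for (\ref{27}), which has a \emph{fixed} Neumann boundary at $x=0$, so one must (i) show that spreading occurs, i.e.\ $\underline h(t)-\sigma t\to\infty$, so that the advancing left boundary does not overtake the front --- this is exactly where $\mu$ being large is used --- and (ii) recover the sharp front speed $c(\mu,1,\lambda)$ and the semi--wave profile $q^*$ in this shifted setting, which I would do by passing to the moving frame $y=x-\sigma t$ and adapting the supersolution and subsolution constructions of \cite{wang5}. The accompanying comparison principle with a moving Dirichlet left boundary, and the bookkeeping of the limits ($\delta\to0$, then $t\to\infty$, then $\mu\to\infty$), present no essential difficulty. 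Finally, the reason the strip must be anchored at $x=\sigma t=(c_4+\varepsilon)t$ rather than at $x=0$ is that for $x<(c_4+\varepsilon)t$ the predator may still be present and $u$ only satisfies $u_t-u_{xx}\geq u(\lambda-b/m-u)$ there, so a lower solution with carrying capacity $\lambda$ reaching into that zone would fail to stay below $u$.
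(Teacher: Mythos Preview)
Your approach is genuinely different from the paper's, and would---if completed---prove a \emph{stronger} statement than what the paper actually claims. The paper's equation~(\ref{83}) defines
\[
c_5 \;=\; 2\sqrt{\lambda-\frac{b\kappa}{1+m\kappa}},\qquad \kappa:=\frac{2M_2}{\lambda-b/m},
\]
not $2\sqrt{\lambda}$; so you have redefined the constant in the theorem statement.

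The paper avoids the moving left boundary entirely. Rather than isolate the predator-free strip, it notes that (\ref{10}) eventually gives $u\geq\tfrac12(\lambda-b/m)$ on $[0,(c_1-\varepsilon)t]$, whence $v\leq M_2\leq\kappa u$ there; and since $g(t)<(c_4+\varepsilon)t<(c_1-\varepsilon)t$, one has $v\equiv0$ beyond $(c_1-\varepsilon)t$. Thus $v\leq\kappa u$ holds on \emph{all} of $[0,h(t)]$, giving the global differential inequality
\[
u_t-u_{xx}\;\geq\;u\Bigl(\lambda-\frac{b\kappa}{1+m\kappa}-u\Bigr),\qquad 0<x<h(t),
\]
and one compares directly with (\ref{27}) (Neumann at $x=0$, carrying capacity $\lambda-\frac{b\kappa}{1+m\kappa}$), to which Propositions~\ref{p1} and~\ref{p3} apply verbatim. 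No new semi-wave analysis is needed.

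Your route trades this simplicity for a sharper bound: if the moving-boundary auxiliary problem really has front speed $c(\mu,1,\lambda)$, then combined with (\ref{12}) you would get $\lim_{\mu\to\infty}\lim_{t\to\infty}h(t)/t=2\sqrt{\lambda}$ exactly. But the obstacle you identify is real and not merely bookkeeping. Proposition~\ref{p3} is proved for a fixed Neumann wall; here the left Dirichlet value $\lambda-b/m-\delta$ is strictly below the carrying capacity $\lambda=q^*(\infty)$, so one must show that the solution nonetheless climbs to $\lambda$ uniformly in the interior and that the front still selects the semi-wave with speed $c(\mu,1,\lambda)$. This is plausible but is a separate theorem, not available in the references you cite.
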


\begin{proof}
The inequality $d(1+c)<\lambda-b/m$ implies $c_3<c_4<c_1<c_2$. Choose $\varepsilon>0$ such that $c_4+\varepsilon<c_1-\varepsilon$. So
\begin{equation}\label{82}
g(t)<c_4+\varepsilon<c_1-\varepsilon<h(t),~~~~\forall~\rho\geq\rho_{\varepsilon},~\mu\geq\mu_{\varepsilon},~\tau>\tau_1.
\end{equation}

Note that $v\leq M_2$, where $M_2$ is given by Theorem \ref{th3}. By (\ref{10}), there exists $\tau_2\gg1$ such that $v(t,x)\leq\frac{2M_2}{\lambda-b/m}u(t,x)$ for all $t\geq\tau_2$ and $0\leq x\leq(c_1-\varepsilon)t$. Besides, by (\ref{9}) and (\ref{82}), we have $v(t,x)=0$ for $t\geq0$ and $x\geq(c_1-\varepsilon)t$. Denote $\kappa:=\frac{2M_2}{\lambda-b/m}$ and let $(\underline u,\underline h)$ be the unique solution of
$$
\begin{cases}
\underline u_t-\underline u_{xx}=(\lambda-\frac{b\kappa}{1+m\kappa})\underline u-\underline u^2,~~~~&t>\tau_2,~0<x<\underline h(t),\\
\underline u_x(t,0)=0,~\underline u(t,\underline h(t))=0,                                   &t\geq\tau_2,\\
\underline h^{\prime}(t)=-\mu \underline u_x(t,\underline h(t)),                            &t\geq\tau_2,\\
\underline u(\tau_2,x)=u(\tau_2,x),~\underline h(\tau_2)=h(\tau_2),&0\leq x\leq\underline h(\tau_2).
\end{cases}
$$
By the comparison principle, we have $\underline u(t,x)\leq u(t,x)$ and $\underline h(t)\leq h(t)$ for $t\geq\tau_2$ and $0\leq x\leq \underline h(t)$.  By Proposition \ref{p1}, we have
\begin{equation}\label{83}
\lim\limits_{\mu\rightarrow\infty}\lim\limits_{t\rightarrow\infty}\frac{\underline h(t)}{t}=\lim\limits_{\mu\rightarrow\infty}c(\mu,1,\lambda-\frac{b\kappa}{1+m\kappa})=2\sqrt{\lambda-\frac{b\kappa}{1+m\kappa}}:=c_5,
\end{equation}
which implies (\ref{26}) holds. Similar to the proof of (\ref{10}), we can show that
$$
\liminf\limits_{t\rightarrow\infty}\min\limits_{0\leq x\leq (c_5-\varepsilon)t}u(t,x)\geq\lambda-\frac{b\kappa}{1+m\kappa}>\lambda-\frac{b}{m}>0,
$$
which leads to (\ref{25}).
\end{proof}

\begin{theorem}\label{th10}
Suppose $\lambda<d$. Then the following hold,

{\upshape(i)}For any given $\varepsilon>0$, there exist $\mu_{\varepsilon}$, $T\geq1$ such that, when $\mu\geq\mu_\varepsilon$,
\begin{equation}\label{14}
\lim\limits_{t\rightarrow\infty}\sup\limits_{x\geq(c_3+\varepsilon)t}v(t,x)=0.
\end{equation}
Furthermore, if $d\leq2\sqrt{\lambda}+1$, then
\begin{equation}\label{15}
\lim\limits_{t\rightarrow\infty}\max\limits_{(c_2+\varepsilon)t\leq x\leq(c_3-\varepsilon)t}|v(t,x)-1|=0.
\end{equation}

{\upshape(ii)}There exists $\rho_0\gg 1$ such that when $\rho>\rho_0$,
\begin{equation}\label{16}
\lim\limits_{\rho\rightarrow\infty}\lim\limits_{t\rightarrow\infty}\frac{g(t)}{t}=c_3.
\end{equation}
\end{theorem}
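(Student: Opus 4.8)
The unifying observation is that $\lambda<d$ forces $c_2<c_3$, and that $u$ vanishes far to the right of $c_2t$: by (\ref{12}) and Proposition \ref{p1}, $\limsup_{t\to\infty}h(t)/t\le \bar c_\mu=c(\mu,1,\lambda)<2\sqrt\lambda=c_2$, so for every $\varepsilon$ there is $T$ with $u\equiv0$ — hence $v_t-dv_{xx}=v(1-v)$ — on $\{t\ge T,\ x\ge(c_2+\tfrac{\varepsilon}{2})t\}$, which is exactly (\ref{8}). On this set $v$, extended by $0$ past $x=g(t)$, is a subsolution of the logistic equation of growth rate $1$, whose KPP speed is $c_3=2\sqrt d$, and the theorem amounts to trapping $v$ between barriers of this equation whose fronts travel at speeds arbitrarily close to $c_3$. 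For (\ref{14}) I would use $\bar v(t,x)=Ae^{-\alpha(x-\gamma t)}$ with $\gamma=c_3+\tfrac{\varepsilon}{2}>2\sqrt d$ and $\alpha>0$ a root of $d\alpha^{2}-\gamma\alpha+1\le0$ (which exists precisely because $\gamma>2\sqrt d$); then $\bar v_t-d\bar v_{xx}-\bar v(1-\bar v)=\bar v(\alpha\gamma-d\alpha^{2}-1)+\bar v^{2}\ge0$. Since $g(T)<c_4T$ and $v(T,\cdot)\le M_2$ on $[(c_2+\tfrac{\varepsilon}{2})T,g(T)]$, choosing $A$ large (depending on $T$) gives $\bar v\ge v$ on the slice $t=T$, while on the line $x=(c_2+\tfrac{\varepsilon}{2})t$ one has $\bar v=Ae^{\alpha(c_3-c_2)t}\ge A\ge M_2\ge v$. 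The comparison principle yields $v\le\bar v$ on $\{t\ge T,\ x\ge(c_2+\tfrac{\varepsilon}{2})t\}$, so $v(t,x)\le Ae^{-\alpha\varepsilon t/2}\to0$ uniformly for $x\ge(c_3+\varepsilon)t$, i.e.\ (\ref{14}).

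For (\ref{16}) no smallness of $\mu$ is needed, only $\limsup h/t<c_2<c_3$. The predation bonus $\tfrac{cuv}{u+mv}$, which a priori could drag $g$ toward $c_4$, is confined to $x<h(t)<c_2t$; comparing the logistic equation on $\{x\ge h(t)\}$ (equivalently on $\{x\ge(\bar c_\mu+\varepsilon'')t\}$ with a Dirichlet datum $M_2$ on a boundary of sub-$c_3$ speed) with the travelling profile $\Psi$ connecting $M_2$ down to $1$, the overshoot relaxes away from that boundary, so $\limsup_{t\to\infty}\sup_{x\ge c_2t}v(t,x)\le1$. For $\rho$ large one also has $\underline c_\rho=c(\rho,d,1)>c_2$, so by (\ref{13}) eventually $g(t)>c_2t$ and the free boundary lies in the pure-logistic region. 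Fixing small $\delta>0$, let $(q,\underline c_\rho)$ solve (\ref{3}) with $(\beta,d,\theta)=(\rho,d,1)$ and set $\bar v(t,x)=(1+3\delta)q(\bar g(t)-x)$, $\bar g(t)=(1+3\delta)\underline c_\rho\,t+C$; using $dq''-\underline c_\rho q'+q(1-q)=0$ one gets $\bar v_t-d\bar v_{xx}-\bar v(1-\bar v)=3\delta(1+3\delta)\bigl(\underline c_\rho\,q'+q^{2}\bigr)\ge0$ and $-\rho\bar v_x(t,\bar g(t))=(1+3\delta)\underline c_\rho=\bar g'(t)$, so $(\bar v,\bar g)$ is a supersolution of the logistic Stefan problem; taking $C$, then $T$, large makes $\bar v$ dominate $v$ on $x=c_2t$ and on the initial slice (using $v(t,c_2t)\le1+\delta$), whence $g\le\bar g$ and $\limsup_t g/t\le(1+3\delta)\underline c_\rho$. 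Letting $\delta\to0$ and combining with $\liminf_t g/t\ge\underline c_\rho$ from (\ref{13}) gives $\lim_{t\to\infty}g(t)/t=\underline c_\rho$, and $\lim_{\rho\to\infty}\underline c_\rho=c_3$ (Proposition \ref{p1}) is (\ref{16}).

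For (\ref{15}) the bound $\limsup_{t\to\infty}\max_{x\ge(c_2+\varepsilon)t}v\le1$ is immediate from the logistic equation there together with the previous paragraph. For the matching lower bound one builds, in the spirit of Theorem \ref{th2} and Proposition \ref{p3}, a subsolution $(\underline v,\underline g)$ of the logistic Stefan problem whose front advances at speed $c_3-\varepsilon$ and whose left foot is anchored in $\{x\lesssim c_2t\}$, where $v$ is already bounded below by a positive constant; in a frame moving at speed $c_3-\varepsilon$ the relevant profile is $e^{-(c_3-\varepsilon)y/2d}\sin(\pi y/L)$ on a window of width $L$, an admissible logistic subsolution of height $\delta$ once $\delta$ and $L$ are coupled to the negative principal eigenvalue, and the Stefan inequality $c_3-\varepsilon\le\rho\,\delta\,\tfrac{\pi}{L}e^{-(c_3-\varepsilon)L/2d}$ must be closed — it is here, after optimizing $L$ and $\delta$, that the hypothesis $d\le2\sqrt\lambda+1$ (i.e.\ $c_3^{2}\le4(c_2+1)$) enters. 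The subsolution then forces $g(t)\ge(c_3-\varepsilon)t$ and $v\ge1-\varepsilon$ on $(c_2+\varepsilon)t\le x\le(c_3-\varepsilon)t$, which with the upper bound gives (\ref{15}).

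The delicate point throughout is the sharp control of $g(t)$ in (\ref{15}) and (\ref{16}): one must produce barriers for the one-phase logistic Stefan problem on domains carrying a nontrivial moving Dirichlet datum inherited from the predation-bonus region, and check that the resulting free-boundary speed is dictated by the semi-wave problem (\ref{3}) with $\theta=1$ rather than by the larger $c(\rho,d,1+c)$ — so that the $O(M_2)$ overshoot at the left boundary must be absorbed without disturbing the $q$-profile at the free boundary. The exponential barrier of (\ref{14}) is, by contrast, routine.
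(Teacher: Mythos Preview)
Your barriers for (\ref{14}) and your semi-wave supersolution for (\ref{16}) are essentially the paper's approach. For (\ref{14}) the paper uses the specific exponential $\xi(t,x)=M_2e^{(r(\tau_3)-c_3\tau_3)/\sqrt d}\,e^{(c_3t-x)/\sqrt d}$ on $\{c_3t\le x\le r(t)\}$, which is your $Ae^{-\alpha(x-\gamma t)}$ with $\gamma=c_3$, $\alpha=1/\sqrt d$; your choice $\gamma=c_3+\varepsilon/2$ with $d\alpha^2-\gamma\alpha+1\le0$ is an equivalent variant. For (\ref{16}) the paper builds $\bar v(t,x)=(1+Ke^{-\varepsilon t/2})q_*(\bar g(t)-x)$ from the semi-wave $q_*$ of (\ref{3}) with $\theta=1+\sigma$ and a time-decaying prefactor, whereas you use $\theta=1$ with the constant prefactor $(1+3\delta)$; both give $\limsup_t g(t)/t\le c_3$ after sending the small parameter to zero, and your computation $\bar v_t-d\bar v_{xx}-\bar v(1-\bar v)=3\delta(1+3\delta)(\underline c_\rho q'+q^2)\ge0$ is correct. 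Note, however, that both constructions need a quantitative upper bound $v(t,x)\le 1+o(1)$ on the left lateral boundary $x\approx c_2t$; the paper draws this from the explicit estimate (\ref{19}), not from an abstract ``travelling profile $\Psi$ connecting $M_2$ down to $1$'' --- no such wave of $v_t-dv_{xx}=v(1-v)$ exists since $M_2$ is not an equilibrium.

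The genuine gap is in (\ref{15}): you have the role of the hypothesis $d\le 2\sqrt\lambda+1$ exactly backwards. In the paper the \emph{lower} bound $\liminf_t\min_{[0,(c_3-\varepsilon)t]}v\ge1$ is simply (\ref{11}) from Theorem~\ref{th5}, obtained by comparison with the full logistic Stefan problem on $[0,\underline g(t)]$; it requires no window subsolution, no Stefan inequality to ``close'', and no hypothesis on $d$ beyond $\rho\ge\rho_\varepsilon$. The hypothesis $d\le 2\sqrt\lambda+1$ is used for the \emph{upper} bound (\ref{18}): the paper takes the barrier $\phi(t,x)=1+M_2e^{g(\tau_1)}e^{(c_2+\varepsilon/2)t-x}$ and computes $\phi_t-d\phi_{xx}=(c_2+\tfrac{\varepsilon}{2}-d)(\phi-1)$, which dominates $\phi(1-\phi)=-\phi(\phi-1)$ precisely when $c_2+\tfrac{\varepsilon}{2}-d\ge-\phi$; since $\phi\ge1$ this reduces to $c_2-d\ge-1$, i.e.\ $d\le 2\sqrt\lambda+1$. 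Your claim that the upper bound is ``immediate'' misses this computation, and your attempt to locate the hypothesis in a moving-window Stefan inequality $c_3-\varepsilon\le\rho\delta\tfrac{\pi}{L}e^{-(c_3-\varepsilon)L/2d}$ does not hold up: that inequality involves only $\rho,d,\delta,L$ and the speed $c_3-\varepsilon$, so the quantity $c_2=2\sqrt\lambda$ cannot enter there.
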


\begin{proof}
(i) The assumption $\lambda<d$ implies $c_1<c_2<c_3<c_4$. So there exist $\tau_3\gg 1$ and $\mu_*\gg 1$ such that $h(t)\leq c_3t$ for all $t\geq\tau_3$ and $\mu\geq \mu_*$, which implies that $u(t,x)=0$ for all $t\geq\tau_3$, $x\geq c_3t$ and $\mu\geq\mu_*$. Choose $0<\varepsilon\ll1$ and define
$$
r(t)=\max\{(c_3+\varepsilon)t,g(t)\},~~~~for~t\geq\tau_3.
$$
Remember that $v(t,x)=0$ for $x\geq g(t)$ and $v_x(t,g(t))<0$, it is easy to verify that $v$ satisfies, in the weak sense,
$$
\begin{cases}
v_t-dv_{xx}\leq v-v^2,~&t\geq\tau_3,~c_3t\leq x\leq r(t),\\
v(t,x)\leq M_2,                &t\geq\tau_3,~c_3t\leq x\leq r(t),\\
v(t,r(t))=0,                 &t\geq\tau_3,
\end{cases}
$$
where $M_2$ is given by Theorem \ref{th3}. Define
$$
\xi(t,x)=M_2 e^{\frac{r(\tau_3)-c_3\tau_3}{\sqrt{d}}}e^{\frac{c_3t-x}{\sqrt{d}}},~~t\geq\tau_3,~c_3t\leq x<r(t).
$$
Obviously,
$$
\sup\limits_{x\geq (c_3+\varepsilon)t}\xi(t,x)\leq M_2e^{\frac{r(\tau_3)-c_3\tau_3}{\sqrt{d}}}e^{\frac{-\varepsilon t}{\sqrt{d}}}\rightarrow 0,~~t\rightarrow\infty,
$$
$$
\xi(t,c_3t)>M_2,~~\xi(t,r(t))>0,~~~~for~t\geq\tau_3,
$$
$$
\xi(\tau_3,x)>M_2,~~~~for~c_3\tau_3\leq x\leq r(\tau_3).
$$
Moreover, the direct calculations yield that
$$
\xi_t-d\xi_{xx}\geq\xi(1-\xi),~~~~for~t\geq\tau_3,~c_3t\leq x\leq r(t).
$$
By the comparison principle, we have $v(t,x)\leq\xi(t,x)$ for $t\geq\tau_3$ and $c_3 t\leq x\leq r(t)$. Thus, (\ref{14}) is obtained.

The proof of (\ref{15}). Combined with (\ref{11}), we just have to prove that
\begin{equation}\label{18}
\limsup_{t\rightarrow\infty}\max\limits_{x\geq c_2+\varepsilon}v(t,x)\leq 1.
\end{equation}
Choose $0<\varepsilon\ll1$  such that $c_2+\varepsilon/2\leq c_3-\varepsilon$. It follows from (\ref{8}) that $v$ satisfies
$$
\begin{cases}
v_t-dv_{xx}=v-v^2,~~~~&t\geq\tau_1,~(c_2+\varepsilon/2)t\leq x< g(t).\\
v(t,(c_2+\varepsilon)t)\leq M_2, ~v(t,g(t))=0,  &t\geq\tau_1.\\
v(\tau_1,x)\leq M_2,                              &(c_2+\varepsilon/2)\tau_1\leq x< g(\tau_1).
\end{cases}
$$
Define
$$
\phi(t,x)=1+M_2e^{g(\tau_1)}e^{(c_2+\varepsilon/2)t-x},~for~t\geq\tau_1,~(c_2+\varepsilon/2)t\leq x\leq g(t),
$$
Note that $d\leq2\sqrt{\lambda}+1$ and $\phi(t,x)\geq1$ for $t\geq\tau_1$, $(c_2+\varepsilon/2)t\leq x\leq g(t)$. Then $c_2+\frac{\varepsilon}{2}-d\geq2\sqrt{\lambda}-d\geq-1$. Thus, direct calculations yield that
$$
 \phi_t-d\phi_{xx}=(c_2+\frac{\varepsilon}{2}-d)(\phi-1)\geq\phi(1-\phi),~t\geq\tau_1,~(c_2+\varepsilon/2)t\leq x\leq g(t),
$$
$$
\phi(t,(c_2+\varepsilon/2)t)>1+M_2,~\phi(t,g(t))\geq1, ~t\geq\tau_1,
$$
$$
\phi(\tau_1,x)\geq 1+M_2,~(c_2+\varepsilon/2)\tau_1\leq x\leq g(\tau_1).
$$
By the comparison principle, we have $v(t,x)\leq\phi(t,x)$ for $t\geq\tau_1$ and $(c_2+\varepsilon/2)t\leq x\leq g(t)$. Thus,
\begin{equation}\label{19}
\max\limits_{x\geq(c_2+\varepsilon)t}v(t,x)=\max\limits_{(c_2+\varepsilon)t\leq x\leq g(t)}v(t,x)\leq\max\limits_{(c_2+\varepsilon)t\leq x\leq g(t)}\phi(t,x)=1+M_2e^{g(\tau_1)}e^{-\varepsilon t/2},
\end{equation}
which implies that (\ref{18}) holds.

{\upshape{(ii)}} For any given $0<\sigma\ll1$ and $\rho\geq\rho_{\varepsilon}$, where $\rho_{\varepsilon}$ is given by Theorem \ref{th5}. Let $(q_*(y),c_*)$ be the unique solution of (\ref{3}) with $(\beta,d,\theta)=(\rho,d,1+\sigma)$. Then $q_*^{\prime}(y)>0$,
$q_*(y)\rightarrow 1+\sigma$ as $y\rightarrow\infty$ and
\begin{equation}\label{85}
\lim\limits_{\rho\rightarrow\infty}c_*=2\sqrt{d(1+\sigma)}.
\end{equation}
Note that (\ref{19}), there exist $\rho_0>\rho_{\varepsilon}$, $\tau_0>\tau_1$, $y_0\gg1$ such that for all $\rho\geq\rho_0$, we have
$$
c_*>c_2+\varepsilon,~g(t)>(c_2+\varepsilon)t,~~~~\forall~t>\tau_0,
$$
$$
v(t,x)<(1+M_2e^{g(\tau_1)}e^{-\varepsilon t/2})q(y),~~\forall~t\geq\tau_0,~x\geq(c_2+\varepsilon)t,~y\geq y_0.
$$
Let $K=M_2e^{g(\tau_1)}$ and define
$$
\overline g(t)=c_*t+\vartheta K(e^{-\varepsilon\tau_0/2}-e^{-\varepsilon t/2})+y_0+g(\tau_0),~t\geq\tau_0,
$$
$$
\overline v(t,x)=(1+Ke^{-\varepsilon t/2})q_*(\overline g(t)-x),~t\geq\tau_0,~(c_2+\varepsilon)t\leq x\leq\overline g(t),
$$
where $\vartheta$ is to be determined. It is obvious that
$$
\overline g(\tau_0)>g(\tau_0),~\overline v(\tau_0,x)\geq v(\tau_0,x),~(c_2+\varepsilon)\tau_0\leq x\leq g(\tau_0),
$$
$$
\overline v(t,\overline g(t))=0=v(t,g(t)),~\overline v(t,(c_2+\varepsilon)t)>v(t,(c_2+\varepsilon)t),~t\geq\tau_0.
$$
Similar to the arguments of in \cite[Lemma 3.5]{Kaneko1}, we can calculate that, when $\vartheta$ is suitable large,
$$
\overline v_t-d\overline v_{xx}\geq\overline v(1-\overline v),~~t\geq\tau_0,~(c_2+\varepsilon)t\leq x<\overline g(t),
$$
$$
\overline g^{\prime}(t)\geq-\rho\overline v_x(t,\overline v(t)),~~~~t\geq\tau_0.
$$
Remember that $v_t-dv_{xx}=v-v^2$ for $t\geq\tau_0$ and $(c_2+\varepsilon)t\leq x<g(t)$, so we have $v(t,x)\leq\overline v(t,x)$ and $g(t)\leq\overline g(t)$ for all $t\geq\tau_0$ and $(c_2+\varepsilon)t\leq x<g(t)$ by the comparison principle. Therefore,
$$
\limsup_{t\rightarrow\infty}\frac{g(t)}{t}\leq c_*.
$$
By (\ref{85}) and the arbitrariness of $\sigma$, we have
$$
\limsup_{\rho\rightarrow\infty}\limsup_{t\rightarrow\infty}\frac{g(t)}{t}\leq 2\sqrt{d}=c_3.
$$
This combined with the second inequality of (\ref{17}) leads to (\ref{16}).
\end{proof}

\begin{theorem}\label{th11}
Assume that $0<m\lambda-b<b/c$ and denote $c_0:=\min\{c_1,c_3\}$. For any given $0<\varepsilon<c_0$, we have
$$
\lim\limits_{t\rightarrow\infty}\max\limits_{[0,(c_0-\varepsilon)t]}|u(t,x)-u^*|=0,~\lim\limits_{t\rightarrow\infty}\max\limits_{[0,(c_0-\varepsilon)t]}|v(t,x)-v^*|=0,
$$
where $u^*,~v^*$ is given by Theorem \ref{th7}.
\end{theorem}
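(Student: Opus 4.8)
The idea is to upgrade the compact-set convergence of Theorem~\ref{th7} and the first-order spreading estimates of Theorem~\ref{th5} to uniform convergence on the moving region $[0,(c_0-\varepsilon)t]$ by a squeezing argument. Fix $\varepsilon'\in(0,\varepsilon)$. Combining Theorem~\ref{th5} (which tacitly needs $\mu,\rho$ large, as elsewhere in this section) with the a priori bounds of Theorem~\ref{th3} and the logistic comparison~(\ref{28}), for every $\delta>0$ there is $T_\delta$ with
$$
\lambda-b/m-\delta\le u(t,x)\le \lambda+\delta,\qquad 1-\delta\le v(t,x)\le 1+c+\delta
$$
for all $t\ge T_\delta$ and $0\le x\le(c_0-\varepsilon')t$; note that $c_0=\min\{c_1,c_3\}$ is exactly the largest speed for which both (\ref{10}) and (\ref{11}) apply, and that $h(t),g(t)>(c_0-\varepsilon')t$ for $t$ large.

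First I would reduce the statement to a Liouville-type assertion for entire solutions. If the theorem fails, there are $\delta_0>0$ and points $(t_k,x_k)$ with $t_k\to\infty$, $0\le x_k\le(c_0-\varepsilon)t_k$ and $|u(t_k,x_k)-u^*|+|v(t_k,x_k)-v^*|\ge\delta_0$. If $\{x_k\}$ stays bounded this contradicts Theorem~\ref{th7}. Otherwise $x_k\to\infty$ along a subsequence; since $\varepsilon'<\varepsilon$ and $h(t),g(t)>(c_0-\varepsilon')t$ eventually, the shifted functions $u_k(t,x)=u(t+t_k,x+x_k)$, $v_k(t,x)=v(t+t_k,x+x_k)$ are defined on space--time sets that exhaust $\mathbb{R}^2$ while staying at distance $\to\infty$ from the free boundaries, so interior parabolic estimates (together with the uniform $L^\infty$ bounds) produce, along a further subsequence, an entire solution $(\tilde u,\tilde v)$ on $\mathbb{R}\times\mathbb{R}$ of
$$
\tilde u_t-\tilde u_{xx}=\tilde u\Big(\lambda-\tilde u-\tfrac{b\tilde v}{\tilde u+m\tilde v}\Big),\qquad \tilde v_t-d\tilde v_{xx}=\tilde v\Big(1-\tilde v+\tfrac{c\tilde u}{\tilde u+m\tilde v}\Big),
$$
with $\lambda-b/m\le\tilde u\le\lambda$, $1\le\tilde v\le 1+c$ and $|\tilde u(0,0)-u^*|+|\tilde v(0,0)-v^*|\ge\delta_0$. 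It then suffices to show that the only such entire solution is $(u^*,v^*)$.

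For this I would iterate, using the one-sided monotonicity of the kinetics: $g_1(u,v):=u(\lambda-u-\tfrac{bv}{u+mv})$ is nonincreasing in $v$ and $g_2(u,v):=v(1-v+\tfrac{cu}{u+mv})$ is nondecreasing in $u$. Put $\overline u_1=\lambda$, $\underline u_1=\lambda-b/m$, $\overline v_1=1+c$, $\underline v_1=1$ and define $\overline u_{n+1},\underline u_{n+1}$ as the (unique, since $m\lambda>b$) positive roots of $\lambda-u-\tfrac{b\underline v_n}{u+m\underline v_n}=0$ and of $\lambda-u-\tfrac{b\overline v_n}{u+m\overline v_n}=0$, then $\overline v_{n+1},\underline v_{n+1}$ as the positive roots of $1-v+\tfrac{c\overline u_{n+1}}{\overline u_{n+1}+mv}=0$ and of $1-v+\tfrac{c\underline u_{n+1}}{\underline u_{n+1}+mv}=0$. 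A simultaneous induction shows that $\{\overline u_n\},\{\overline v_n\}$ decrease, $\{\underline u_n\},\{\underline v_n\}$ increase, and --- comparing the bounded entire function $\tilde u$ (resp. $\tilde v$) with spatially homogeneous solutions of the scalar ODEs $\dot z=g_1(z,\underline v_n)$, $\dot z=g_1(z,\overline v_n)$ (resp. $\dot z=g_2(\overline u_{n+1},z)$, $\dot z=g_2(\underline u_{n+1},z)$), each logistic-like with its positive equilibrium as global attractor --- that $\underline u_n\le\tilde u\le\overline u_n$, $\underline v_n\le\tilde v\le\overline v_n$ for every $n$. The four monotone bounded sequences converge, and letting $n\to\infty$ in their defining relations yields $\underline u\le\overline u$, $\underline v\le\overline v$ with
$$
\lambda-\overline u=\tfrac{b\underline v}{\overline u+m\underline v},\quad \lambda-\underline u=\tfrac{b\overline v}{\underline u+m\overline v},\quad \overline v-1=\tfrac{c\overline u}{\overline u+m\overline v},\quad \underline v-1=\tfrac{c\underline u}{\underline u+m\underline v}.
$$

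The step I expect to be hardest is showing that $0<m\lambda-b<b/c$ forces $\overline u=\underline u$ and $\overline v=\underline v$. Subtracting the two $u$-relations and the two $v$-relations and writing $p:=\overline u-\underline u\ge0$, $q:=\overline v-\underline v\ge0$, one finds (using that all four limits lie in $[\lambda-b/m,\lambda]\times[1,1+c]$ and the Lipschitz dependence of the ratio term) that $q$ is a positive multiple of $p$ and then an algebraic identity for $p$ that $0<m\lambda-b<b/c$ renders incompatible with $p>0$; this is precisely the computation behind Theorem~\ref{th7} (cf. the argument for \cite[Theorem~3.2]{liu2020free}). Once $\overline u=\underline u$, $\overline v=\underline v$, the common values solve (\ref{90}) and hence equal $u^*,v^*$, so $(\tilde u,\tilde v)\equiv(u^*,v^*)$, contradicting $|\tilde u(0,0)-u^*|+|\tilde v(0,0)-v^*|\ge\delta_0$; this completes the proof. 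The parabolic compactness and the logistic-likeness of the reduced scalar ODEs are routine. (Alternatively, one can bypass the entire-solution reduction and run the same iteration directly on shrinking strips $[0,(c_0-\varepsilon_n)t]$, $\varepsilon_n\nearrow\varepsilon$, at the cost of building super/sub-solutions that kill the Neumann trace at $x=0$ and dominate the solution on the moving boundary, exactly as in the proofs of (\ref{10}) and Theorem~\ref{th10}.)
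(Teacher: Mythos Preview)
Your proposal is correct. Your primary route---reduction by parabolic compactness to an entire solution on $\mathbb{R}\times\mathbb{R}$ trapped in $[\lambda-b/m,\lambda]\times[1,1+c]$, followed by an ODE squeeze using the one-sided monotonicity of the kinetics---is a genuinely different argument from the paper's. The paper takes precisely the ``alternative'' you sketch at the end: it fixes a sequence $\varepsilon/2=\varepsilon_0<\varepsilon_1<\cdots\nearrow\varepsilon$, sets $\overline u_n,\underline u_n,\overline v_n,\underline v_n$ equal to the $\limsup/\liminf$ of $u,v$ over the nested strips $[0,(c_0-\varepsilon_n)t]$, notes their monotonicity in $n$, and then invokes an external lemma \cite[Theorem~3.2]{25} to obtain iterative inequalities of the shape
\[
\overline u_{n+1}\le \overline u_n+\frac{\overline u_n}{\gamma}\Big(\lambda-\overline u_n-\frac{b\,\underline v_n}{\overline u_n+m\,\underline v_n}\Big),
\]
and likewise for the other three quantities; letting $n\to\infty$ produces exactly the four-relation system you write down for $(\overline u,\underline u,\overline v,\underline v)$. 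Both approaches then need the same closing algebraic step---that $0<m\lambda-b<b/c$ forces $\overline u=\underline u$, $\overline v=\underline v$---which the paper also leaves to the computation behind Theorem~\ref{th7}. Your Liouville route is more self-contained (no black-box citation) and makes the squeezing mechanism transparent via comparison with spatially constant ODE orbits started at time $-T\to-\infty$; the paper's route is shorter on the PDE side but outsources the key iterative inequality.
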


\begin{proof}
The proof is inspired by \cite[Lemma 4.6]{25}. Define
$$
C_{[0,M]}:=\{(u,v)\in C^2(D_{\infty}^{h(t)})\times C^2 (D_{\infty}^{g(t)}):0\leq u\leq M_1,~0\leq v\leq M_2\},
$$
where $D_{\infty}^s$, $M_1$ and $M_2$ are given by Theorem \ref{th3}. Then $C_{[0,M]}$ is an invariant region of problem (\ref{Q}). By Theorem \ref{th5}, it is obvious that
$$
\lim\limits_{t\rightarrow\infty}\inf\limits_{0<x<(c_0-\frac{\varepsilon}{2})t}(u(t,x),v(t,x))>\mathbf{0},~\lim\limits_{t\rightarrow\infty}\inf\limits_{0<x<(c_0-\varepsilon)t}(u(t,x),v(t,x))>\mathbf{0}.
$$
Let $\{\varepsilon_n\}_{n=1}^{\infty}$ be a sequence with
$$
\frac{\varepsilon}{2}=\varepsilon_0<\varepsilon_1<\varepsilon_2<...<\varepsilon_n<...,~~\lim\limits_{n\rightarrow\infty}\varepsilon_n=\varepsilon.
$$
Define sequences $\overline u_n$, $\overline v_n$, $\underline u_n$, $\underline v_n$ as follows,
$$
\overline u_n=\lim\limits_{t\rightarrow\infty}\sup\limits_{0<x<(c_0-\varepsilon_n)t}u(t,x),~~\overline v_n=\lim\limits_{t\rightarrow\infty}\sup\limits_{0<x<(c_0-\varepsilon_n)t}v(t,x),
$$
$$
\underline u_n=\lim\limits_{t\rightarrow\infty}\inf\limits_{0<x<(c_0-\varepsilon_n)t}u(t,x),~~\underline v_n=\lim\limits_{t\rightarrow\infty}\inf\limits_{0<x<(c_0-\varepsilon_n)t}v(t,x).
$$
Then $\{\overline u_n\}$, $\{\overline v_n\}$ are monotonically non-increasing and$\{\underline u_n\}$, $\{\underline v_n\}$ are monotonically non-decreasing. Furthermore, there exist positive constant $\overline u$, $\overline v$, $\underline u$, $\underline v$ such that
$$
\lim\limits_{n\rightarrow\infty}(\overline u_n,\overline v_n,\underline u_n,\underline v_n)=(\overline u,\overline v,\underline u,\underline v).
$$
By \cite[Theorem 3.2]{25}, there exists a constant $\gamma>0$ such that
$$
\overline u_{n+1}\leq\overline u_n+\frac{\overline u_n}{\gamma}(\lambda-\overline u_n-\frac{b\underline v_n}{\overline u_n+m\underline v_n}),
$$
By letting $\varepsilon\rightarrow\infty$, we get $\lambda-\overline u-\frac{b\underline v}{\overline u+m\underline v}\geq0$. Similarity, we can verify that
$$
\lambda-\underline u-\frac{b\overline v}{\underline u+m\overline v}\leq0,~1-\overline v+\frac{c\overline u}{\overline u+m\overline v}\geq0,~1-\underline v+\frac{c\underline u}{\underline u+m\underline v}\leq0.
$$
Then $\overline u=\underline u=u^*$, $\overline v=\underline v=v^*$,
where $(u^*, v^*)$ is the positive equilibrium of (\ref{Q}) determined by (\ref{90}).
The proof is finished.
\end{proof}

\section{Discussion}
This paper mainly deals with the dynamics of a ratio-dependent type prey-predator model with free boundaries $x=h(t)$ and $x=g(t)$ which describe the spreading fronts of prey and predator, respectively. Here, the two free boundaries $x=h(t)$ and $x=g(t)$ may intersect each other as time goes on. So their dynamics are rather complicated.

In Section 3, we get the long time behaviors of two species as follows:

(i) if the prey (predator) cannot spread to the whole space, then it will be vanishing eventually; (Theorem \ref{th6})

(ii) if two species can spread successfully, then they will stabilize at a positive equilibrium state; (Theorem \ref{th7})

(iii) if the prey (predator) spreads successfully and the predator (prey) cannot spread into $[0,\infty)$, then the former will stabilize at a positive point and the latter will vanish eventually. (Theorem \ref{th8})

Section 4 gives some conditions for spreading and vanishing about prey and predator which are summarized as follows.

(i) If one of the initial habitat and the moving parameter of the prey (predator) is``suitably large", then it is always able to spread successfully. While when both the initial habitat and the moving parameter of the prey (predator) are``suitably small", the prey (predator) will vanish eventually; (Theorem \ref{th1})

(ii) If the predator spreads slowly and the initial habitat of prey is much larger than that of predator, then the prey's territory always covers that of the predator and the prey will spread successfully no matter whether the predator successfully spreads or not (Theorem \ref{th2}).

(iii) Assume that the prey spreads slowly and the predator does quickly, if $\lambda^2-(b-m\lambda)<0$ and the predator spreads successfully, then the prey will vanish eventually.(Theorem \ref{th4})

The asymptotic spreading speed and asymptotic speed are studied in Section 5, whose conclusions show the complicated and realistic spreading phenomena of prey and predator. To better understand these dynamics, we first provide the spreading speed of the prey and predator when problem (\ref{Q}) is uncoupled($b=c=0$):
\begin{equation}\label{86}
\begin{cases}
u_t-u_{xx}=\lambda u-u^2,~~~~&t>0,~0<x<h(t),\\
u_x(t,0)=u(t,h(t))=0,        &t\geq0,\\
h^{\prime}(t)=-\mu u_x(t,h(t)),&t\geq0,\\
u(0,x)=u_0(x),~h(0)=h_0,        &0\leq x\leq h_0.
\end{cases}
\end{equation}
\begin{equation}\label{87}
\begin{cases}
v_t-dv_{xx}=v-v^2,~~~~&t>0,~0<x<g(t),\\
v_x(t,0)=v(t,g(t))=0,        &t\geq0,\\
g^{\prime}(t)=-\rho v_x(t,g(t)),&t\geq0,\\
v(0,x)=v_0(x),~g(0)=g_0,        &0\leq x\leq g_0.
\end{cases}
\end{equation}
By Proposition \ref{p1}, we have
$$
\lim\limits_{\mu\rightarrow\infty}\lim\limits_{t\rightarrow\infty}\frac{h(t)}{t}=2\sqrt{\lambda},
~\lim\limits_{\mu\rightarrow\infty}\lim\limits_{t\rightarrow\infty}\frac{g(t)}{t}=2\sqrt{d},
$$
which shows the asymptotic speed of the prey is $2\sqrt{\lambda}$ and that of the predator is $2\sqrt{d}$ when problem (\ref{Q}) is uncoupled.

(i) Assume that $d(1+c)<\lambda-b/m$. Then the asymptotic speed of prey is between $2\sqrt{\lambda-\frac{b\kappa}{1+m\kappa}}$ and $2\sqrt{\lambda}$ and that of predator is between $2\sqrt{d}$ and $2\sqrt{d(1+c)}$ as $\mu$, $\rho\rightarrow\infty$. Those manifest that the predator could decrease the prey's asymptotic speed, but, conversely, the prey could accelerate that of predator. Besides, when we are to move to the right at a fixed  speed less than $2\sqrt{d}$, we will observe that the two species will stabilize at the unique positive equilibrium; when we do that with a fixed speed between $2\sqrt{d(1+c)}$ and $2\sqrt{\lambda-\frac{b\kappa}{1+m\kappa}}$, we will only see the prey; when we do that with a fixed speed over than $2\sqrt{\lambda}$, we will see neither.(Theorem \ref{th5}, Theorem \ref{9}, Theorem \ref{th11})

(ii)Assume that $m\lambda>b$ and $\lambda<d$. Then the asymptotic speed of prey is between $2\sqrt{\lambda-b/m}$ and $2\sqrt{\lambda}$ and that of predator is $2\sqrt{d}$ when $\mu$, $\rho\rightarrow\infty$, which illustrates that the predator could decrease the asymptotic speed of the prey, while the prey has no effect on the predator. In addition, if we are to move to the right at a fixed speed less than $2\sqrt{\lambda-b/m}$, then we will see that the two species will stabilize at the unique positive equilibrium; if we do that with a fixed speed between $2\sqrt{\lambda}$ and $2\sqrt{d}$, then we will only observe the predator; if we do that with a fixed speed more than $2\sqrt{d}$, then we will see neither.(Theorem \ref{th5}, Theorem \ref{th10}, Theorem \ref{th11})

By comparing the ratio-dependent type(\ref{Q}) and the logistic type with the same free boundary mechanism, we find some differences. The main difference is that when the the speed of predator is slower than that of prey (i.e., $d(1+c)<\lambda-\frac{b}{m}$), the predator could decrease the prey’s asymptotic speed in the setting model studied in this paper; while in \cite{wang12} the former is not harmful to the latter. The phenomenon in this paper seems closer to reality.

\section*{Acknowledgments}
The author's work was supported by NSFC (12071316). The author is also grateful for the anonymous reviewers for their helpful comments and suggestions.

\section*{Declarations of interest: none.}
%%%%%%%%%%%%%%%%%%%%%%%%%%%%%%%%%%%[参考文献]%%%%%%%%%%%%%%%%%%%%%%%%%%%%%%%%%%%%%%%%%%%%%%%%%%%%%%
\section*{References}
\bibliographystyle{plain}\setlength{\bibsep}{0ex}
\scriptsize
\bibliography{mybibfile}
\end{document}